\theoremstyle{definition} 
 \newtheorem{definition}{Definition}[section]
 \newtheorem{remark}[definition]{Remark}
\theoremstyle{plain}      
 \newtheorem{proposition}[definition]{Proposition}
 \newtheorem{theorem}[definition]{Theorem}
 \newtheorem{corollary}[definition]{Corollary}
 \newtheorem{lemma}[definition]{Lemma}
\begin{document}

\title{Meyer functions and the signatures of fibered 4-manifolds}

\author{Yusuke Kuno\thanks{
Work partially supported by JSPS Research Fellowships for Young Scientists
(22$\cdot$4810)}}

\address{
Department of Mathematics, Tsuda College,\\
2-1-1 Tsuda-Machi, Kodaira-shi,
Tokyo 187-8577 JAPAN \\
email:\,\tt{kunotti@tsuda.ac.jp}
}

\maketitle

\begin{abstract}
We give a survey on Meyer functions, with emphasis on their
application to the signatures of fibered 4-manifolds.
\end{abstract}

\begin{classification}
14D05, 20F34, 32G15; 57N13.
\end{classification}

\begin{keywords}
the signature cocycle, Meyer function, local signature.
\end{keywords}

\tableofcontents   

\section{Introduction}

In this chapter, we give a survey on secondary invariants called {\it Meyer functions}
with emphasis on their application to the signatures of fibered 4-manifolds.
These secondary invariants are associated to the vanishing of
the primary invariant called the {\it first MMM class} $e_1$, the first
in a series of characteristic classes of surface bundles
\cite{Miller} \cite{Morita1} \cite{Mumford}.
There have been known various representatives of $e_1$ coming from different geometric
contexts, as group 2-cocycles on the mapping class group or differential 2-forms
on the moduli space of curves (see \cite{Kaw2}, especially for the latter).
The view point we take here is the signature of surface bundles over surfaces, and
we work with the {\it signature cocycle} $\tau_g$ introduced by W. Meyer \cite{Mey2}
(and by Turaev \cite{Turaev} independently)
a $\mathbb{Z}$-valued 2-cocycle of the mapping class group $\mathcal{M}_g$ of
a closed oriented surface of genus $g$, whose cohomology class is proportional to $e_1$.

As was shown by Meyer, if $g=1$ or $2$, the cocycle $\tau_g$ is the coboundary
of a unique $\mathbb{Q}$-valued 1-cochain $\phi_g$ of $\mathcal{M}_g$. The existence
of such a 1-cochain implies that over the rationals, $e_1$ of a surface bundle with fiber a surface of
genus $1$ or $2$ vanishes. The uniqueness of $\phi_g$ follows from the fact
$H^1(\mathcal{M}_g;\mathbb{Q})=0$. These 1-cochains are called the Meyer functions
of genus $1$ or $2$.
Meyer \cite{Mey2} extensively studied the case of genus $1$ and gave an explicit
formula for $\phi_1$ which involves the Dedekind sums.
In \cite{Ati2}, Atiyah reproved Meyer's formula by a quite different method and
also showed various number theoretic or differential geometric aspects of $\phi_1$.

In \S 2, we recall basic results of Meyer and Atiyah with a sketch of proof
for several assertions. In \S 3, we mention an application of Meyer functions
to localization of the signature of fibered 4-manifolds.
This topic has been studied also from algebro-geometric point of view, which
we shall mention in \S \ref{subsec:Horikawa}.
Recently, various higher genera or higher dimensional analogues of
$\phi_1$ have been considered and a part of Atiyah's result has been generalized
to these generalizations. In \S 4, we present three examples of these generalizations.

Some conventions about surface bundles follow.
Throughout this chapter $g$ is an integer $\ge 1$.
Let $\Sigma_g$ be a closed oriented $C^{\infty}$-surface of genus $g$.
By a {\it $\Sigma_g$-bundle} we mean a smooth fiber bundle $\pi\colon E\to B$ over a
$C^{\infty}$-manifold $B$ with fiber $\Sigma_g$ such that the fibers are coherently oriented:
the tangent bundle along the fibers $T\pi:=\{ v\in TE; \pi_*(v)=0\}$ is oriented.
The transition functions of such bundles take values in ${\rm Diff}^+(\Sigma_g)$,
the group of orientation preserving diffeomorphisms of $\Sigma_g$ endowed with $C^{\infty}$-topology.
The {\it mapping class group} $\mathcal{M}_g:=\pi_0({\rm Diff}^+(\Sigma_g))$ is the group
of connected components of ${\rm Diff}^+(\Sigma_g)$. In other words, $\mathcal{M}_g$
is the quotient group ${\rm Diff}^+(\Sigma_g)/{\rm Diff}_0(\Sigma_g)$, where
${\rm Diff}_0(\Sigma_g)$ is the group of diffeomorphisms isotopic to the identity.

For a $\Sigma_g$-bundle $\pi\colon E\to B$ over a path connectd space $B$,
the associated is (the conjugacy class of) a homomorphism $\chi\colon \pi_1(B)\to \mathcal{M}_g$
called the {\it monodromy}. This correspondence is defined as the composite
\begin{align}
\{ \Sigma_g {\rm -bundles\ over\ } B \}/{\rm isom}
&= [B,B{\rm Diff}^+(\Sigma_g)] \nonumber \\
&\to {\rm Hom}(\pi_1(B),\mathcal{M}_g)/{\rm conj}. \label{eq:mon}
\end{align}
Namely, if $f\colon B \to B{\rm Diff}^+(\Sigma_g)$ is a classifying map
of $\pi\colon E\to B$, then $\chi=f_*$, the induced map from $\pi_1(B)$ to
$\pi_1(B{\rm Diff}^+(\Sigma_g))=\pi_0({\rm Diff}^+(\Sigma_g))=\mathcal{M}_g$.
To be more careful about the base points and to give a more direct description,
choose a base point $b_0\in B$ and fix an orientation preserving diffeomorphism
$\varphi\colon \Sigma_g \to \pi^{-1}(b_0)$. Let $\ell \colon [0,1]\to B$ be a based loop.
Since $[0,1]$ is contractible, the pull back $\ell^*(E)\to [0,1]$ of $\pi\colon E\to B$ is
a trivial $\Sigma_g$-bundle. Hence there exist a trivialization
$\Phi\colon \Sigma_g \times [0,1]\to \ell^*(E)$ such that $\Phi(x,0)=\varphi(x)$.
In this setting, $\chi\colon \pi_1(B,b_0)\to \mathcal{M}_g$ is given by
$\chi([\ell])=[\Phi(x,1)^{-1}\circ \varphi]$. Here our convention is:
1) for any two mapping classes $f_1$ and $f_2$, their multiplication $f_1\circ f_2$ means
that $f_2$ is applied first, 2) for any two homotopy classes of based loops
$\ell_1$ and $\ell_2$, their product $\ell_1\cdot \ell_2$ means that $\ell_1$ is traversed first.

By the result of Earle-Eells \cite{EE}, if $g\ge 2$ the space ${\rm Diff}_0(\Sigma_g)$
is contractible, so the classifying space $B{\rm Diff}^+(\Sigma_g)$
is a $K(\mathcal{M}_g,1)$-space. Hence the map (\ref{eq:mon}) is a bijection.
If $g=1$, then $\Sigma_1=T^2$, the two torus. The embedding
$T^2 \hookrightarrow {\rm Diff}_0(T^2)$ as parallel translations is a homotopy equivalence,
and $\mathcal{M}_1$ is isomorphic to $SL(2;\mathbb{Z})$. Thus we have a fibration
$B{\rm Diff}^+(T^2)\to BSL(2;\mathbb{Z})=K(SL(2;\mathbb{Z}),1)$ with fiber
$BT^2=\mathbb{C}P^{\infty}\times \mathbb{C}P^{\infty}$. In particular,
by elementary obstruction theory, it follows that if the base space $B$ has
a homotopy type of a 1-dimensional CW complex, then the isomorphism class
of $T^2$-bundles over $B$ is also classified well by monodromies:
(\ref{eq:mon}) is bijective.

\section{The signature cocycle and Meyer's theorem}

In this section we review the signature cocycle, its variants, and
the original version of Meyer functions, i.e., the Meyer function of genus 1 and 2.

\subsection{Prehistory}

In study of the topology of fiber bundles, a basic question is how
the topological invariants of the total space, the base space and the fiber are related.
In 50's Chern, Hirzebruch and Serre studied the signature of the total space
of a fiber bundle, by an application of the Serre spectral sequence.
Recall that the {\it signature} of a compact oriented manifold
$M$ of dimension $4n$ (possibly with boundary), denoted by ${\rm Sign}(M)$, is the signature of the
intersection form $H_{2n}(M;\mathbb{R}) \times H_{2n}(M;\mathbb{R}) \to \mathbb{R}$,
which is a symmetric bilinear form. If the dimension of $M$ is not a multiple
of 4, we understand that the signature of $M$ is zero.

\begin{theorem}[Chern-Hirzebruch-Serre \cite{CHS}]
\label{thm:CHS}
Let $E$ and $B$ be closed oriented manifolds and
$E\to B$ a fiber bundle with fiber a closed oriented manifold $F$.
We arrange that the orientation of $F$ is compatible with those of $E$ and $B$.
If $\pi_1(B)$ trivially acts on the homology $H_*(F;\mathbb{R})$,
then the signature of $E$ is the product of the signatures of $B$ and $F$:
${\rm Sign}(E)={\rm Sign}(B){\rm Sign}(F)$.
\end{theorem}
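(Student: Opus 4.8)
The plan is to read off $\mathrm{Sign}(E)$ from the cohomology Serre spectral sequence of $\pi\colon E\to B$ with real coefficients, using that a bundle of closed oriented manifolds is ``self-dual''. Write $p=\dim B$, $q=\dim F$. If $p+q\not\equiv 0\pmod 4$ then $\mathrm{Sign}(E)=0$ by convention, and an inspection of the parities of $p$ and $q$ shows $\mathrm{Sign}(B)\,\mathrm{Sign}(F)=0$ as well, so the identity is trivial; thus we may assume $\dim E=4n$ and (handling the components of $B$ one at a time) that $B$ is connected. Since $\pi_1(B)$ acts trivially on $H^*(F;\mathbb R)$, the spectral sequence has $E_2^{s,t}=H^s(B;\mathbb R)\otimes H^t(F;\mathbb R)$, converges to $H^{s+t}(E;\mathbb R)$, and is multiplicative, the product on $E_2$ being the tensor product of the cup products of $B$ and $F$ up to the Koszul sign.

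The heart of the matter is that this spectral sequence respects Poincar\'e duality. The orientations identify $E_2^{p,q}=H^p(B;\mathbb R)\otimes H^q(F;\mathbb R)$ with $\mathbb R$, and since no differential enters or leaves this corner, $E_r^{p,q}\cong\mathbb R$ for all $r\le\infty$; evaluating products then gives on each page a pairing $E_r^{s,t}\otimes E_r^{p-s,q-t}\to E_r^{p,q}=\mathbb R$, which for $r=2$ is the tensor product of the Poincar\'e pairings of $B$ and $F$ and is therefore perfect. Using the Leibniz rule, the vanishing of $E_r^{s,t}$ for $t>q$, and the vanishing of $d_r$ on $E_r^{p,q}$, one checks that $d_r$ is skew self-adjoint for this pairing, that at the middle spot $E_r^{p/2,q/2}$ the image of the incoming $d_r$ is an isotropic subspace whose orthogonal complement is the kernel of the outgoing $d_r$, and that the pairings on $E_{r+1}$ are again perfect. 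Since a non-degenerate symmetric form restricted to $W^\perp/W$ with $W$ isotropic has the same signature, this yields $\mathrm{Sign}\big(E_r^{p/2,q/2}\big)=\mathrm{Sign}\big(E_{r+1}^{p/2,q/2}\big)$ for all $r$ (a skew-symmetric piece contributing signature $0$), and hence $\mathrm{Sign}\big(E_2^{p/2,q/2}\big)=\mathrm{Sign}\big(E_\infty^{p/2,q/2}\big)$ when $p$ is even.

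To finish I would connect $E_\infty$ to the intersection form and compute $E_2^{p/2,q/2}$. The spectral-sequence filtration $F^\bullet H^{2n}(E;\mathbb R)$ satisfies $F^sH^{2n}\cdot F^{s'}H^{2n}\subseteq F^{s+s'}H^{4n}$, which vanishes once $s+s'>p$; with non-degeneracy of the intersection form this forces $(F^sH^{2n})^\perp=F^{p+1-s}H^{2n}$. When $p$ is odd, $F^{(p+1)/2}H^{2n}$ is then Lagrangian, so $\mathrm{Sign}(E)=0=\mathrm{Sign}(B)\,\mathrm{Sign}(F)$, both factors on the right vanishing since their dimensions are odd. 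When $p$ is even the same algebraic lemma gives $\mathrm{Sign}(E)=\mathrm{Sign}\big(\mathrm{gr}^{p/2}H^{2n}(E;\mathbb R)\big)=\mathrm{Sign}\big(E_\infty^{p/2,q/2}\big)$. Finally $E_2^{p/2,q/2}=H^{p/2}(B;\mathbb R)\otimes H^{q/2}(F;\mathbb R)$ carries the tensor product of the two middle intersection forms: if $p\equiv 2\pmod 4$ both factors are symplectic, a Lagrangian of one tensored with all of the other is a Lagrangian of the product, the signature is $0$, and this equals $\mathrm{Sign}(B)\,\mathrm{Sign}(F)=0$; if $p\equiv 0\pmod 4$ both are symmetric and the signature of a tensor product of symmetric forms is the product of the signatures, giving $\mathrm{Sign}(B)\,\mathrm{Sign}(F)$. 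Stringing the equalities together proves the theorem.

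I expect the main obstacle to be the compatibility of the spectral sequence with Poincar\'e duality asserted in the second paragraph: verifying that the differentials and the multiplicative structure interact with the pairings as claimed — equivalently, that every page is a Poincar\'e-duality algebra whose cohomology is again one — is precisely where orientability of the fibre and the triviality of the $\pi_1(B)$-action enter, guaranteeing that the coefficient systems are trivial and the relevant top-dimensional groups are genuinely $\mathbb R$. (In the smooth category one could instead argue via the Hirzebruch signature theorem and the rational multiplicativity of Pontryagin classes in $0\to T\pi\to TE\to\pi^*TB\to 0$ together with integration along the fibre, but this needs a differentiable structure and essentially the same duality input.)
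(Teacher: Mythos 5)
Your proposal is correct and is essentially the original Chern--Hirzebruch--Serre argument that the paper simply cites: the multiplicative Serre spectral sequence with its Poincar\'e-ring structure on each page, the page-by-page preservation of the signature of the middle term, the identification of $\mathrm{Sign}(E_\infty)$ with $\mathrm{Sign}(E)$ via the self-dual filtration, and the computation of the $E_2$ term as a tensor product of intersection forms. This is exactly the three-step scheme the paper recalls in its sketch of the proof of Meyer's theorem ($\mathrm{Sign}(E_r)=\mathrm{Sign}(E_{r+1})$, $\mathrm{Sign}(E_\infty)=\mathrm{Sign}(E)$, and evaluation at $E_2$), so there is nothing to add.
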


The assumption that $\pi_1(B)$ trivially acts on the homology of the fiber
is crucial, and the conclusion of the theorem does not hold in general.
Indeed, Atiyah \cite{Ati1} and Kodaira \cite{Kod}
independently constructed an algebraic surface with non-zero signature,
which is the total space of a complex analytic family of
compact Riemann surfaces over a compact Riemann surface.
Their method uses branched covering of algebraic surfaces, and can be
used to produce examples such that the genus of the fiber can be taken
arbitrarily integers $\ge 4$.

One important consequence is that there are non-trivial characteristic
classes of surface bundles. In fact,  since the signature of a manifold which is the boundary
of some manifold is zero, the map
$${\rm Sign}\colon \Omega_2(B{\rm Diff}^+(\Sigma_g))\to \mathbb{Z},
\quad [f]\mapsto {\rm Sign}(f^*\xi)$$
is well-defined. Here $\Omega_2(X)$ is the second oriented bordism group of a space $X$
(hence its element is represented by some continuous map $f$ from a closed oriented surface
to $X$) and $\xi$ is a universal $\Sigma_g$-bundle over the classifying space $B{\rm Diff}^+(\Sigma_g)$.
Since $\Omega_2(X)$ is naturally isomorphic to $H_2(X;\mathbb{Z})$, the map ${\rm Sign}$ becomes
an element in ${\rm Hom}(H_2(B{\rm Diff}^+(\Sigma_g)), \mathbb{Z})$, and the examples by
Atiyah and Kodaira shows that the map ${\rm Sign}$ is non-trivial. Hence
$H^2(B{\rm Diff}^+(\Sigma_g); \mathbb{Z})\cong H^2(\mathcal{M}_g; \mathbb{Z})$
is non-trivial and contains an element of infinite order, provided $g\ge 4$.
As we recall in the following, Meyer showed that this non-triviality holds when $g\ge3$.

\subsection{The signature cocycle}

W. Meyer \cite{Mey1} \cite{Mey2} studied the signature of surface bundles over surfaces
and introduced the signature cocycle. The basic idea of Meyer is
to decompose the base space into simple pieces: pairs of pants. 

Let $\Sigma_{0,n}$ be a compact surface obtained from the two sphere
by removing $n$ open disks with embedded disjoint closures. Specifying an orientation
of $\Sigma_{0,n}$ and a base point $*\in {\rm Int}(\Sigma_{0,n})$, we take $n$ based loops
$\ell_1,\ldots, \ell_n\in \pi_1(\Sigma_{0,n},*)$ such that each $\ell_i$ is freely homotopic to
one of the boundaries with the counter-clockwise orientation, and the relation $\ell_1\cdots \ell_n=1
\in \pi_1(\Sigma_{0,n},*)$ holds. The group $\pi_1(\Sigma_{0,n},*)$
is free of rank $n-1$, generated by any $n-1$ of $\ell_1,\ldots,\ell_n$.
The surface $P=\Sigma_{0,3}$ is called a {\it pair of pants}.

Given $f_1,\ldots,f_{n-1}\in \mathcal{M}_g$, consider a $\Sigma_g$-bundle
$\pi \colon E(f_1,\ldots,f_{n-1})\to \Sigma_{0,n}$ with $\pi^{-1}(*)=\Sigma_g$ whose monodromy
$\chi\colon \pi_1(\Sigma_{0,n},*)\to \mathcal{M}_g$ sends $\ell_i$ to $f_i$ ($i=1,\ldots,n-1$).
Since $\Sigma_{0,n}$ is homotopy equivalent to a 1-dimensional CW complex,
such a bundle exists and is unique up to isomorphism (see \S 1).
The total space $E(f_1,\ldots,f_{n-1})$ is a compact oriented 4-manifold with boundary.

\begin{definition}
\label{dfn:sig-cocycle}
The {\it signature cocycle} $\tau_g\colon \mathcal{M}_g \times \mathcal{M}_g \to \mathbb{Z}$ is defined by
$$\tau_g(f_1,f_2):={\rm Sign}(E(f_1,f_2)), \quad f_1,f_2\in \mathcal{M}_g.$$
\end{definition}

The map $\tau_g$ is actually a normalized two cocycle of $\mathcal{M}_g$.

\begin{lemma}
\label{lem:tau}
For $f_1,f_2,f_3\in \mathcal{M}_g$, we have
\begin{enumerate}
\item $\tau_g(f_1f_2,f_3)+\tau_g(f_1,f_2)=\tau_g(f_1,f_2f_3)+\tau_g(f_2,f_3)$;
\item $\tau_g(f_1,1)=\tau_g(1,f_1)=\tau_g(f_1,f_1^{-1})=0$;
\item $\tau_g(f_1^{-1},f_2^{-1})=-\tau_g(f_1,f_2)$;
\item $\tau_g(f_1,f_2)=\tau_g(f_2,f_1)$;
\item $\tau_g(f_3f_1f_3^{-1},f_3f_2f_3^{-1})=\tau_g(f_1,f_2)$.
\end{enumerate}
\end{lemma}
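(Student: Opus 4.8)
The plan is to derive all five identities from the geometric description of $\tau_g$ as a signature of a 4-manifold fibered over a pair of pants or a sphere-with-holes, using two standard tools: Novikov additivity of the signature under gluing along boundary components, and the behavior of the signature under orientation reversal. The key construction is that for a sphere with $n$ holes, one can build the bundle $E(f_1,\dots,f_{n-1})$ and, by cutting $\Sigma_{0,n}$ along embedded arcs into pairs of pants, decompose its total space into pieces, each a bundle over a pair of pants. Since the arcs pull back to copies of $\Sigma_g\times[0,1]$, which have zero signature, Novikov additivity gives $\mathrm{Sign}(E(f_1,\dots,f_{n-1}))$ as a sum of terms $\tau_g(\cdot,\cdot)$.

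\medskip

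First I would establish the \emph{cocycle identity} (1). Consider a sphere with four holes $\Sigma_{0,4}$ with boundary loops corresponding to $f_1,f_2,f_3$ and $(f_1f_2f_3)^{-1}$. Cutting along an arc separating the first two boundary components from the last two expresses $\mathrm{Sign}(E(f_1,f_2,f_3))$ as $\tau_g(f_1,f_2)+\tau_g(f_1f_2,f_3)$; cutting along the complementary arc (separating the last two from the first two) expresses the same number as $\tau_g(f_2,f_3)+\tau_g(f_1,f_2f_3)$. Equating the two gives (1). This is the one step where I expect to need care: I must check that the relevant $3$-holed sphere sub-bundles really have monodromies $(f_1,f_2)$, $(f_1f_2,f_3)$, etc., and that the orientations induced on the glued-up pieces are consistent, so that Novikov additivity applies with the correct signs and no extra boundary contributions. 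This bookkeeping — matching the combinatorics of pants decompositions of $\Sigma_{0,4}$ with the composition law in $\mathcal{M}_g$ — is the main obstacle; everything else follows formally or from easy geometry.

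\medskip

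Next, for (2): the bundle $E(1,f_1)$ (resp. $E(f_1,1)$) over $\Sigma_{0,3}$ has one monodromy trivial, so it contains a copy of $\Sigma_g\times(\text{disk with two holes})$, and in fact the whole total space is, up to the boundary structure, a product $\Sigma_g\times(\text{surface})$; since a product of a surface with a $2$-manifold has a $4$-manifold whose middle-dimensional intersection form is skew in the relevant sense — more directly, $E(1,f_1)$ is diffeomorphic to a bundle over an annulus times an interval, hence bounds in a way making the signature vanish — one gets $\tau_g(1,f_1)=\tau_g(f_1,1)=0$. For $\tau_g(f_1,f_1^{-1})=0$: the monodromy around the third boundary is $(f_1 f_1^{-1})^{-1}=1$, so two of the three boundary monodromies are $f_1,f_1^{-1}$ and the resulting bundle extends over a disk (capping the trivial boundary), and actually the total space fibers over a cylinder, giving signature zero. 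Then (3) follows by reversing the orientation of the base $\Sigma_{0,3}$: reversing the base orientation reverses the orientation of the total space, multiplying the signature by $-1$, while it sends the counterclockwise boundary loops $\ell_i$ to their inverses, hence replaces $(f_1,f_2)$ by $(f_1^{-1},f_2^{-1})$ (after relabeling); this yields $\tau_g(f_1^{-1},f_2^{-1})=-\tau_g(f_1,f_2)$.

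\medskip

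Finally, (4) and (5) are quick consequences. For symmetry (4): relabeling the three boundary loops of the pair of pants, or equivalently applying an orientation-preserving self-diffeomorphism of $\Sigma_{0,3}$ that permutes two boundary components, does not change the diffeomorphism type of the total space nor its orientation, but it can swap the roles of $f_1$ and $f_2$ (up to conjugating by a power of $f_1f_2$ and using (1)–(3) to absorb the discrepancy), giving $\tau_g(f_1,f_2)=\tau_g(f_2,f_1)$; alternatively, combine (1), (2), (3) purely algebraically to deduce $\tau_g(f_1,f_2)-\tau_g(f_2,f_1)$ vanishes. For conjugation invariance (5): replacing $(f_1,f_2)$ by $(f_3f_1f_3^{-1}, f_3f_2f_3^{-1})$ amounts to changing the trivialization $\varphi\colon\Sigma_g\to\pi^{-1}(*)$ by $f_3$, which produces an isomorphic $\Sigma_g$-bundle over $\Sigma_{0,3}$ and hence a diffeomorphic total space with the same orientation; therefore the signatures agree. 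Throughout, the only nontrivial input is Novikov additivity plus the vanishing of $\mathrm{Sign}(\Sigma_g\times[0,1]\times[0,1])$, together with the careful orientation conventions fixed in \S 1.
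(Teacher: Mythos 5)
Your strategy for part (1) is exactly the paper's: take the bundle $E(f_1,f_2,f_3)\to\Sigma_{0,4}$, decompose the base into two pairs of pants in two different ways, and apply Novikov additivity; the paper in fact only proves (1) and leaves (2)--(5) unproved, so your arguments for those are a genuine supplement, and they are essentially sound ((5) via change of the reference identification, (2) via the product/mapping-torus structure, and the observation that (4) follows formally from (1)--(3) is correct, since for any normalized $2$-cocycle with $z(x,x^{-1})=0$ one has $z(x^{-1},y^{-1})=-z(y,x)$ automatically).

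Two points need repair. First, the decomposition of $\Sigma_{0,4}$ must be along \emph{separating simple closed curves}, not arcs: a properly embedded arc cannot cut a four-holed sphere into two pairs of pants, and, more importantly, the preimage of an arc is $\Sigma_g\times[0,1]$, which is a codimension-zero piece of the boundary of each part but not a union of boundary components, so the clean form of Novikov additivity (gluing along closed, clopen pieces of the boundary, as stated in the paper) does not apply and Wall's non-additivity correction could enter. Cutting along a curve $C$ makes $f^{-1}(C)$ a closed $3$-manifold forming entire boundary components of both pieces, which is what makes the additivity legitimate. Relatedly, your second cut is misdescribed: ``separating the last two boundary components from the first two'' is the same partition as the first cut; the curve you need separates $\{\partial_2,\partial_3\}$ from $\{\partial_1,\partial_4\}$ (equivalently, it meets $C_1$ in two points), and it is this cut that yields $\tau_g(f_2,f_3)+\tau_g(f_1,f_2f_3)$. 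Second, there is a circularity in the order of (3)--(5): reversing the base orientation reverses the cyclic order of the boundary loops, so it literally gives $\tau_g(f_2^{-1},f_1^{-1})=-\tau_g(f_1,f_2)$, and your ``relabeling'' to reach (3) as stated uses (4); meanwhile your geometric proof of (4) absorbs the conjugation discrepancy using conjugation invariance, which is (5), not ``(1)--(3)''. This is fixable by proving (5) first (it is independent of the rest), then (4), then (3), or by invoking the algebraic implication (1)+(2)+(3) $\Rightarrow$ (4) after establishing (3) in the symmetric form above --- but as written the dependencies are tangled.
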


\begin{proof}[sketch of proof] Recall the {\it Novikov additivity} of the signature.
Let $M_1$ and $M_2$ be compact oriented manifolds of the same dimension,
$Y_1$ and $Y_2$ closed and open submanifolds of $\partial M_1$ and $\partial M_2$,
respectively, and $\varphi\colon Y_1\to Y_2$ an orientation reversing homeomorphism.
Then the signature of the glued manifold $M_1\cup_{\varphi} M_2$ is the sum of
the signatures of $M_1$ and $M_2$.

We only give the proof of (1), the cocycle condition for $\tau_g$.
Consider a $\Sigma_g$-bundle $\pi\colon E(f_1,f_2,f_3)\to \Sigma_{0,4}$
and let $C_1,C_2\subset \Sigma_{0,4}$ be essential simple closed curves
intersecting each other in two points,
such that $C_1$ cuts $\Sigma_{0,4}$ into two pairs of pants and the boundary of one of the two
contains the free homotopy class of $\ell_1$ and $\ell_2$. According to the decomposition of the base space,
the total space $E(f_1,f_2,f_3)$ can be written as a connected sum of $E(f_1f_2,f_3)$ and $E(f_1,f_2)$.
By the Novikov additivity of the signature, we obtain ${\rm Sign}(E(f_1,f_2,f_3))=\tau_g(f_1f_2,f_3)+\tau_g(f_1,f_2)$.
On the other hand cutting along $C_2$ and arguing similarly, we obtain ${\rm Sign}(E(f_1,f_2,f_3))=\tau_g(f_1,f_2f_3)+\tau_g(f_2,f_3)$.
\end{proof}

The signature cocycle has a purely algebraic description.
We denote by $I_n$ the $n\times n$ identity matrix.
The {\it integral symplectic group} $Sp(2g;\mathbb{Z})$, also called the {\it Siegel modular group}, is defined by
$$Sp(2g;\mathbb{Z}):=\{ A \in GL(2g;\mathbb{Z}); \ ^t AJA=J \},$$
where $J=\left( \begin{array}{cc} 0 & I_g \\ -I_g & 0 \end{array}\right)$ and $I_g$ is the $g\times g$ identity matrix.
Fix a symplectic basis of $H_1(\Sigma_g; \mathbb{Z})$, i.e., elements $A_1,\ldots,A_g,B_1,\ldots,B_g\in H_1(\Sigma_g;\mathbb{Z})$
whose algebraic itersection numbers satisfy
$$(A_i\cdot B_j)=\delta_{ij},\quad (A_i\cdot A_j)=(B_i\cdot B_j)=0.$$
In terms of a symplectic basis, the (left) action of $\mathcal{M}_g$ on $H_1(\Sigma_g;\mathbb{Z})$ is expressed
as matrices and we get a (surjective) group homomorphism
\begin{equation}
\label{eq:rho}
\rho\colon \mathcal{M}_g \to Sp(2g;\mathbb{Z}).
\end{equation}
Given $A,B \in Sp(2g;\mathbb{Z})$, consider a $\mathbb{R}$-linear space
$$V_{A,B}:=\{ (x,y)\in \mathbb{R}^{2g} \oplus \mathbb{R}^{2g}; (A^{-1}-I_{2g})x+(B-I_{2g})y=0 \}$$
and a bilinear form $\langle \ ,\ \rangle_{A,B} \colon V_{A,B}\times V_{A,B}\to \mathbb{R}$
defined by
$$\langle (x,y),(x^{\prime},y^{\prime}) \rangle_{A,B}:=\ ^t (x+y)J(I_{2g}-B)y^{\prime}.$$
It turns out that $\langle \ ,\ \rangle_{A,B}$ is symmetric hence its signature
${\rm Sign}(V_{A,B},\langle \ ,\ \rangle_{A,B})$ is defined. We denote by
$\tau_g^{\rm sp}$ the map $Sp(2g;\mathbb{Z})\times Sp(2g;\mathbb{Z}) \to \mathbb{Z}, (A,B)\mapsto
{\rm Sign}(V_{A,B},\langle \ ,\ \rangle_{A,B})$. Note that $\tau_g^{\rm sp}$ is naturally
defined on the Lie group $Sp(2g;\mathbb{R})$.

\begin{theorem}[Meyer \cite{Mey1}]
The signature cocycle on $\mathcal{M}_g$ is the pull-back of $\tau_g^{\rm sp}$
on $Sp(2g;\mathbb{Z})$, i.e., for any $f_1,f_2\in \mathcal{M}_g$, we have
$$\tau_g(f_1,f_2):={\rm Sign}(V_{\rho(f_1),\rho(f_2)},\langle \ ,\ \rangle_{\rho(f_1),\rho(f_2)}).$$
\end{theorem}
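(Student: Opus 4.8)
\emph{Plan of proof.} The plan is to compute $\tau_g(f_1,f_2)={\rm Sign}(E(f_1,f_2))$ directly from the topology of the $\Sigma_g$-bundle $\pi\colon W:=E(f_1,f_2)\to P$ over the pair of pants $P=\Sigma_{0,3}$, and to match the resulting symmetric form with $\langle\ ,\ \rangle_{A,B}$ for $A=\rho(f_1)$, $B=\rho(f_2)$. Two ingredients are needed: (i) the general fact that the signature of the total space of a surface bundle over a compact surface-with-boundary is the signature of a cup-product pairing on the first cohomology of the base with coefficients in the local system $\mathcal H$ of first (co)homology of the fiber; and (ii) an explicit cellular computation of this pairing over the base $P$, whose fundamental group is free of rank $2$. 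Since only the monodromy matrices $\rho(f_i)$, and not the mapping classes themselves, will enter, the pull-back assertion is automatic once (i) and (ii) are in place.

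For (i), let $\mathcal H$ be the flat $\mathbb R^{2g}$-bundle over $P$ with holonomy $\ell_i\mapsto \rho(f_i)$; the symplectic intersection form on $H_1(\Sigma_g;\mathbb R)$ makes $\mathcal H$ self-dual. Using the Leray--Serre filtration of $H_2(W;\mathbb R)$ one shows that, modulo the radical of the intersection form (the kernel of $H_2(W)\to H_2(W,\partial W)$), this form splits orthogonally as a metabolic piece --- the $E^{2,0}_\infty$ and $E^{0,2}_\infty$ terms, coming from $H^0$ and $H^2$ of the fiber, which are dual isotropic subspaces and hence contribute signature $0$ --- and the $E^{1,1}$ piece. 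Because $W$ has boundary, the latter is the symmetric form on the image of $H^1(P,\partial P;\mathcal H)\to H^1(P;\mathcal H)$ induced by the cup product $H^1(P,\partial P;\mathcal H)\otimes H^1(P;\mathcal H)\to H^2(P,\partial P;\mathbb R)\cong\mathbb R$, the coefficient pairing $\mathcal H\otimes\mathcal H\to\mathbb R$ being the symplectic form. Thus ${\rm Sign}(W)$ equals the signature of this cup-product form; a separate check of orientation conventions (here the compatibility of the fiber and base orientations is used) ensures the sign is $+$ and not $-$.

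For (ii), give $P\simeq S^1\vee S^1$ its standard CW structure with one $0$-cell and two $1$-cells carrying $\ell_1,\ell_2$. The cellular chain complex with coefficients in $\mathcal H$ is $\mathbb R^{2g}\oplus\mathbb R^{2g}\xrightarrow{\ \partial\ }\mathbb R^{2g}$ with $\partial(x,y)=(A^{-1}-I_{2g})x+(B-I_{2g})y$ in suitable conventions, so $H_1(P;\mathcal H)=\ker\partial=V_{A,B}$. Poincar\'e--Lefschetz duality on $P$ together with the self-duality of $\mathcal H$ identifies $H^1(P,\partial P;\mathcal H)\cong H_1(P;\mathcal H)=V_{A,B}$, and a cochain-level computation identifies the map $H^1(P,\partial P;\mathcal H)\to H^1(P;\mathcal H)$ and the cup-product form under this identification with $\big((x,y),(x',y')\big)\mapsto {}^t(x+y)J(I_{2g}-B)y'$, i.e.\ with $\langle\ ,\ \rangle_{A,B}$. (The asymmetry between $A^{-1}$ and $B$, and the precise shape of the form, reflect the choice of a spanning tree and of the generators $\ell_1,\ell_2$ of the free group $\pi_1(P)$.) Combining with (i) gives $\tau_g(f_1,f_2)={\rm Sign}(V_{A,B},\langle\ ,\ \rangle_{A,B})=\tau_g^{\rm sp}(\rho(f_1),\rho(f_2))$.

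The hard part will be step (i): making precise that the metabolic summand contributes zero, and that for a manifold with boundary the remaining form is exactly the cup-product form on $\mathrm{im}\big(H^1(P,\partial P;\mathcal H)\to H^1(P;\mathcal H)\big)$ (the ``half lives, half dies'' mechanism for the radical), and then pinning down every orientation and sign convention so that Meyer's formula appears with the correct sign. Step (ii) is then linear algebra over the rank-$2$ free group, and the symmetry of $\langle\ ,\ \rangle_{A,B}$ can be quoted from the discussion preceding the statement.
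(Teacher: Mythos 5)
Your plan follows essentially the same route as the paper's proof (after Meyer, modelled on Chern--Hirzebruch--Serre): use the Leray--Serre spectral sequence of $E(f_1,f_2)\to P$ to reduce ${\rm Sign}(E(f_1,f_2))$ to the signature of the cup-product form on the $(1,1)$-term $H^1(P,\partial P;\mathcal{H}^1(\Sigma_g;\mathbb{R}))$, and then compute that term cellularly over the rank-two free group to identify it with $(V_{A,B},\langle\ ,\ \rangle_{A,B})$. Your explicit handling of the metabolic summand and of the boundary (image of relative in absolute cohomology) is just an unrolled version of the paper's ``Poincar\'e ring'' steps ${\rm Sign}(E_r)={\rm Sign}(E_{r+1})$ and ${\rm Sign}(E_\infty)={\rm Sign}(E(f_1,f_2))$, so the proposal is correct in outline and matches the paper's argument.
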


\begin{proof}[sketch of proof] The proof proceeds following the proof of Theorem \ref{thm:CHS}.
Consider the Serre cohomology spectral sequence of $E(f_1,f_2)\to P$.
The $E_2$ page is $E_2^{p,q}=H^p(P,\partial P;\mathcal{H}^q(\Sigma_g;\mathbb{R}))$,
where $\mathcal{H}^q(\Sigma_g;\mathbb{R})$ denotes the local system on $P$ whose
stalk at $b\in P$ is the cohomology of $\pi^{-1}(b)$. On the other hand each page $E_r$ is
a {\it Poincar\'e ring} in the sense of \cite{CHS}, in particular its
signature ${\rm Sign}(E_r)$ is defined. The proof is done through three steps:
(1) to show that ${\rm Sign}(E_r)={\rm Sign}(E_{r+1})$,
(2) to show that ${\rm Sign}(E_{\infty})={\rm Sign}(E(f_1,f_2))$, and
(3) to show that ${\rm Sign}(E_2)={\rm Sign}(V_{\rho(f_1),\rho(f_2)}
\langle \ ,\ \rangle_{\rho(f_1),\rho(f_2)})$. To prove the last step,
by taking a simplicial decomposition of $P$, Meyer \cite{Mey1} observed that
$E_2^{1,1}=H^1(P,\partial P;\mathcal{H}^1(\Sigma_g;\mathbb{R}))$ is isomorphic to
$V_{\rho(f_1),\rho(f_2)}$, and the cup product on the former corresponds
to $\langle \ ,\ \rangle_{\rho(f_1),\rho(f_2)}$.
\end{proof}

The signature cocycle is independently introduced by Turaev \cite{Turaev}.
He gave another algebraic description for $\tau_g^{\rm sp}$ and directly proved
that $\tau_g^{\rm sp}$ is a normalized two cocycle. He also discusses a relation
with the Maslov index. For coincidence of the definition of $\tau_g^{\rm sp}$
by Meyer and Turaev, see Endo-Nagami \cite{EN} Appendix.

\begin{remark}
\label{var-sig}
Let $M$ be a closed oriented manifold of dimension $4n-2$ and
$\pi\colon E\to B$ an oriented $M$-bundle with $B$ path connected.
By mimicking Definition \ref{dfn:sig-cocycle}, i.e., by constructing a $M$-bundle
over $P$ and taking the signature of the total space, we obtain a normalized 2-cocycle
$c_M\colon \pi_1(B)\times \pi_1(B)\to \mathbb{Z}$.
In another direction, Atiyah \cite{Ati2} introduced the signature cocycle
on the Lie group $U(p,q)$, the unitary group of the Hermitian form with signature $(p,q)$.
The restriction to $Sp(2g;\mathbb{R}) \subset U(p,p)$ is $\tau_g^{\rm sp}$.
\end{remark}

\subsection{Evaluation of the signature class}
The cocycle $\tau_g\in Z^2(\mathcal{M}_g;\mathbb{Z})$ determines
a cohomology class $[\tau_g]\in H^2(\mathcal{M}_g;\mathbb{Z})$, which here we call
the signature class. We give a combinatorial method to compute the order of $[\tau_g]$.
Following Meyer \cite{Mey2}, we consider the following slightly general situation:
let $G$ be a group and $k\colon G\times G \to \mathbb{Z}$ a normalized 2-cocycle
satisfying $z(x,x^{-1})=0$ for any $x\in G$. Suppose a presentation of $G$ is given.
Namely $G$ fits into an exact sequence
$$1\to R \to F \overset{\varpi}{\to} G \to 1$$
where $F$ is the free group generated by a set $\{ e_i \}_{i\in I}$. Any $x\in F$
can be written as $x=x_1x_2\cdots x_m$, where $x_j\in \{ e_i\} \cup \{ e_i^{-1} \}$.
Define $c\colon F\to \mathbb{Z}$ by
$$c(x):=\sum_{j=1}^m z(\varpi(x_1\cdots x_{j-1}), \varpi(x_j)).$$
It follows that $c$ is well-defined and $\delta c=-\varpi^*z$, i.e.,
$c(xy)=c(x)+c(y)+z(\varpi(x),\varpi(y))$ for $x,y\in F$. Moreover, $c$ is a class
function: $c(yxy^{-1})=c(x)$ for $x,y\in F$.
The 1-cochain $c$ is involved in a commutative diagram
$$\xymatrix{
H_2(G; \mathbb{Z}) \ar[r]^{ev([z])} & \mathbb{Z} \\
R\cap [F,F]/[R,F] \ar[u]^{\cong} \ar[ur]_c }$$
where the vertical isomorphisms is due to Hopf's formula (see \cite{Bro}) and the upper right arrow
is the evaluation map $ev([z])\colon H_2(G;\mathbb{Z}) \to \mathbb{Z}$ by $[z]$.
For $i\in I$, let $e_i^* \colon F\to \mathbb{Z}$ be the map counting
the total exponents of $e_i$ in elements of $F$.

\begin{proposition}[Meyer \cite{Mey2}]
\label{prop:comb.cri}
For $m\in \mathbb{Z}\setminus \{ 0\}$, the order of $[z]\in H^2(G;\mathbb{Z})$ divides $m$
if and only if there exists $\{ m_i\}_{i\in I}\subset \mathbb{Z}$ such that
$mc|_R=\sum_{i\in I}m_ie_i^*|_R$. In particular, if $R$ is the normal closure of a set $\{ r_j \}_{j\in J}\subset F$,
then $[z]=0\in H^2(G;\mathbb{\mathbb{Q}})$ if and only if the liner equation
$c(r_j)=\sum_{i\in I}m_ie_i^*(r_j), j\in J$, has a solution $\{ m_i \}_{i\in I}\subset \mathbb{Q}$.
\end{proposition}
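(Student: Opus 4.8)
The plan is to argue entirely at the level of inhomogeneous group cochains, using only the two facts already supplied above — that $\delta c=-\varpi^{*}z$ and that $c$ is a class function — together with the elementary remark that a $1$-cocycle of the free group $F$ with trivial $\mathbb{Z}$-coefficients is nothing but a homomorphism $F\to\mathbb{Z}$, i.e.\ an element of $\Hom(F,\mathbb{Z})=\Hom(F^{\mathrm{ab}},\mathbb{Z})$, which in turn is exactly the collection of maps $\sum_{i\in I}m_{i}e_{i}^{*}$ with $\{m_{i}\}_{i\in I}\subset\mathbb{Z}$ (one coefficient per generator, the sum being finite on each element of $F$). The same remark holds over $\mathbb{Q}$.

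For the first equivalence: the order of $[z]$ divides $m$ iff $mz$ is an integral coboundary, $mz=\delta b$ for some $b\colon G\to\mathbb{Z}$ (automatically $b(1)=0$ since $z(1,1)=0$). Pulling back along $\varpi$ and combining with $\delta(mc)=-m\,\varpi^{*}z$ gives $\delta(b\circ\varpi+mc)=0$, so $\lambda:=b\circ\varpi+mc$ lies in $\Hom(F,\mathbb{Z})$, say $\lambda=\sum_{i}m_{i}e_{i}^{*}$; restricting to $R$, where $b\circ\varpi$ vanishes, yields $mc|_{R}=\sum_{i}m_{i}e_{i}^{*}|_{R}$. Conversely, given such $\{m_{i}\}\subset\mathbb{Z}$, set $\lambda=\sum_{i}m_{i}e_{i}^{*}$ and $\gamma:=\lambda-mc$. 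Then $\gamma|_{R}=0$, and since $z(\varpi(x),1)=0$ one checks $\gamma(xr)=\gamma(x)+\gamma(r)=\gamma(x)$ for $r\in R$, so $\gamma$ is constant on the left cosets of $R$ and descends to $b\colon G\to\mathbb{Z}$ with $b\circ\varpi=\gamma$; then $\varpi^{*}(\delta b)=\delta\gamma=m\,\varpi^{*}z$, and because $\varpi$ is onto, $\varpi^{*}$ is injective on cochains, so $\delta b=mz$ and $m[z]=0$.

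The ``in particular'' statement is obtained by running the identical manipulation with $\mathbb{Q}$-coefficients: $[z]=0$ in $H^{2}(G;\mathbb{Q})$ iff $z=\delta\beta$ for some $\beta\colon G\to\mathbb{Q}$, which (pulling back, comparing with $-\delta c$, restricting to $R$) is equivalent to the existence of $\{m_{i}\}_{i\in I}\subset\mathbb{Q}$ with $c|_{R}=\sum_{i}m_{i}e_{i}^{*}|_{R}$ as homomorphisms $R\to\mathbb{Q}$ (both sides are homomorphisms because $z(1,1)=0$). It then remains to replace this equality on all of $R$ by the per-relator system $c(r_{j})=\sum_{i}m_{i}e_{i}^{*}(r_{j})$, $j\in J$, when $R$ is the normal closure of $\{r_{j}\}_{j\in J}$. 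The point is that $h:=c|_{R}-\sum_{i}m_{i}e_{i}^{*}|_{R}$ is invariant under conjugation by $F$ — here one uses that $c$ and each $e_{i}^{*}$ are class functions — hence $h$ kills $[R,F]$ and factors through the coinvariants $R/[R,F]$; since every element of $R$ is a product of conjugates $gr_{j}^{\pm1}g^{-1}$, each congruent to $r_{j}^{\pm1}$ modulo $[R,F]$, the abelian group $R/[R,F]$ is generated by the images of the $r_{j}$, so $h=0$ iff $h(r_{j})=0$ for every $j$.

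The routine care is purely bookkeeping — the normalization and sign conventions that make $b$ (resp.\ $\beta$) descend to $G$, and the injectivity of $\varpi^{*}$ on cochains. The one genuinely substantive step is the last reduction: that the class-function property forces everything through the coinvariants $R/[R,F]$, whose generation by the normal generators $r_{j}$ is exactly the Hopf-formula combinatorics underlying the commutative diagram displayed above; indeed one may equally well phrase the whole argument through the isomorphism $R\cap[F,F]/[R,F]\cong H_{2}(G;\mathbb{Z})$ and the evaluation map on it.
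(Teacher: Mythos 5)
Your proof is correct and follows essentially the same route as the paper: the ``if'' direction is exactly the paper's descent argument (the difference cochain $\sum_i m_ie_i^*-mc$ vanishes on $R$, is invariant under right multiplication by $R$ because $z$ is normalized, hence descends to a cochain on $G$ cobounding $mz$). You also supply the two pieces the paper dismisses as straightforward — the ``only if'' direction via $\lambda=b\circ\varpi+mc$ being a $1$-cocycle, hence a homomorphism $\sum_i m_ie_i^*$ on the free group, and the reduction to the normal generators $r_j$ via the fact that $c|_R-\sum_i m_ie_i^*|_R$ is a conjugation-invariant homomorphism and so factors through $R/[R,F]$ — and both are handled correctly.
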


The proof is straightforward, but we briefly mention ``if" part. Take $\{ m_i\}_{i\in I}$
satisfying the condition. Consider the $(1/n)\mathbb{Z}$-valued 1-cochain $c_1:=c-(1/n)\sum_{i\in I}n_i e_i^*$ of $F$.
Then it turns out that $c_1$ descends to a 1-cochain $\overline{c}_1\colon G=F/R\to (1/n)\mathbb{Z}$.
In fact, for $x\in F$ and $r\in R$, we have
\begin{align*}
c_1(xr) &=c(x)+c(r)+\varpi^*z(x,r)-\frac{1}{n}\sum_{i\in I}n_i(e_i^*(x)+e_i^*(r)) \\
&=c(x)-\frac{1}{n}\sum_{i\in I}n_ie_i^*(x)=c_1(x)
\end{align*}
(we use $\varpi(r)=1$). Since $\varpi$ is surjective, it follows that $\delta \overline{c}_1=-z$.

In a special situation, this criterion becomes simpler.
Let $Art(\mathcal{G})$ be a (small) {\it Artin group} associated to
a connected graph $\mathcal{G}$ without loops. This means that $Art(\mathcal{G})$ is generated by
the vertex set $\{ a_i \}_{i\in I}$ of $\mathcal{G}$, subject to the defining relations
$a_ia_ja_i=a_ja_ia_j$ if $a_i$ and $a_j$ are adjacent, and $a_ia_j=a_ja_i$ if not.
Further let $\{ r_j \}_{j\in J}$ be a set of words in $\{ a_i\}_i$. We shall consider the
case $G$ is the group obtained by adding relations $r_j=1$, $j\in J$ to $Art(\mathcal{G})$.
Suppose there exists $\{ m_i\}_{i\in I}\subset \mathbb{Q}$ satisfying the condition of
Proposition \ref{prop:comb.cri}, and let $a_k$ and $a_{\ell}$ be adjacent vertices of $\mathcal{G}$.
Now we have $r_{k,\ell}:=a_ka_{\ell}a_ka_{\ell}^{-1}a_k^{-1}a_{\ell}^{-1}\in R$, and
\begin{align*}
c(r_{k,\ell}) &=
c(a_k)+c((a_{\ell}a_k)a_{\ell}^{-1}(a_{\ell}a_k)^{-1})+z(\varpi(a_k),\varpi(a_k)^{-1}) \\
&=c(a_k)+c(a_{\ell}^{-1})=0.
\end{align*}
Here we use the condition $z(x,x^{-1})=0$ and the fact that $c$ is a class function.
On the other hand, we have $\sum_{i\in I}m_ie_i^*(r_{k,\ell})=m_k-m_{\ell}$. Therefore
we obtain $m_k=m_{\ell}$. Since $\mathcal{G}$ is connected, we conclude
$m_k=m_{\ell}$ for any $k,\ell \in I$. In summary, we have the following.

\begin{proposition}
\label{prop:Art}
Suppose $G$ is the quotient of an Artin group as above, and let $z\in Z^2(G;\mathbb{Z})$
be a normalized 2-cocycle with $z(x,x^{-1})=0$ for any $x\in G$.
\begin{enumerate}
\item For $n\in \mathbb{N}$, $n[z]=0\in H^2(G;\mathbb{Z})$ if and
only if there exist $m\in \mathbb{Z}$ such that
$n\cdot c(r_j)=m\cdot \alpha(r_j)$ for all $j\in J$.
\item In the situation of (1), the 1-cochain $\phi\colon G\to (1/n)\mathbb{Z}$
defined by $\phi(\varpi(x))=-c(x)+(m/n)\alpha(x)$, $x\in F$ is well-defined. Moreover,
$\delta \phi=z$.
\end{enumerate}
Here $\alpha\colon F\to \mathbb{Z}$ is a homomorhism given by $\alpha(a_i)=1$ for $i\in I$.
\end{proposition}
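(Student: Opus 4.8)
The plan is to deduce Proposition~\ref{prop:Art} directly from the general criterion in Proposition~\ref{prop:comb.cri}, using the computation $c(r_{k,\ell})=0$ already carried out in the discussion preceding the statement. The key observation is that the discussion shows: for any family $\{m_i\}_{i\in I}$ satisfying $n\cdot c|_R = \sum_{i} m_i e_i^*|_R$, all the $m_i$ coincide (because $\mathcal{G}$ is connected and the Artin braid/commutation relators $r_{k,\ell}$ lie in $R$ and force $m_k = m_\ell$ for adjacent vertices). So the single parameter $m := m_i$ (common value) together with the relators $\{r_j\}_{j\in J}$ is all that is left to track, and $\sum_i m_i e_i^* = m\cdot\alpha$ since $\alpha(a_i)=1$ for all $i$.

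For part (1): by Proposition~\ref{prop:comb.cri}, $n[z]=0$ in $H^2(G;\mathbb{Z})$ iff there exists $\{m_i\}_{i\in I}\subset\mathbb{Z}$ with $n\cdot c|_R = \sum_i m_i e_i^*|_R$. I would argue the forward direction by restricting this identity to the Artin relators $r_{k,\ell}\in R$ to get $m_k=m_\ell$ for adjacent $k,\ell$, hence (connectedness) a common value $m$; restricting instead to each $r_j$ gives $n\cdot c(r_j) = m\cdot\alpha(r_j)$. For the converse, given $m\in\mathbb{Z}$ with $n\cdot c(r_j)=m\cdot\alpha(r_j)$ for all $j$, set $m_i := m$ for all $i$; then $\sum_i m_i e_i^* = m\alpha$ and one checks $n\cdot c$ and $m\alpha$ agree on all of $R$ because $R$ is the normal closure of the $r_j$ together with the Artin relators, both $c$ and $\alpha$ are class functions (resp.\ a homomorphism vanishing on commutators, and $\alpha$ vanishes on the braid relators $a_ia_ja_i(a_ja_ia_j)^{-1}$ since exponent sums match), and $n\cdot c$, $m\alpha$ both vanish on the Artin relators by the displayed computation $c(r_{k,\ell})=0$ and $\alpha(r_{k,\ell})=0$. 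Here I am using that a class function that vanishes on a generating set of a normal subgroup of the ambient free group, is it enough — more carefully, $n c - m\alpha$ restricted to $R$ is the restriction of a $1$-cochain whose coboundary is $-n\varpi^*z - m\cdot 0 = -n\varpi^*z$ pulled back... actually the clean way is: $nc - m\alpha$ is a homomorphism on $R$ (since $\delta(nc)|_{R\times R} = -n\varpi^*z|_{R\times R}=0$ as $\varpi(R)=1$, and $\alpha$ is a homomorphism), so it suffices to check it vanishes on a set normally generating $R$ and is invariant under $F$-conjugation, which is exactly what the preceding computation provides.

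For part (2): with $m$ as in (1), define $\phi$ on $G$ by $\phi(\varpi(x)) = -c(x) + (m/n)\alpha(x)$ for $x\in F$. Well-definedness is the same style of check as the "if" part of Proposition~\ref{prop:comb.cri}: for $r\in R$, $-c(xr) + (m/n)\alpha(xr) = -c(x) - c(r) - \varpi^*z(x,r) + (m/n)\alpha(x) + (m/n)\alpha(r)$, and since $\varpi(r)=1$ the term $\varpi^*z(x,r)$ vanishes, while $-c(r) + (m/n)\alpha(r) = 0$ because $nc - m\alpha$ vanishes on $R$ by (1). So $\phi(\varpi(xr)) = \phi(\varpi(x))$. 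Finally $\delta\phi = z$: unravel $\delta\phi(\varpi(x),\varpi(y)) = \phi(\varpi(x)) + \phi(\varpi(y)) - \phi(\varpi(xy))$ and use $c(xy) = c(x) + c(y) + \varpi^*z(x,y)$ together with $\alpha(xy) = \alpha(x) + \alpha(y)$; the $\alpha$-terms cancel and what remains is $\varpi^*z(x,y) = z(\varpi(x),\varpi(y))$.

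The main obstacle I anticipate is the bookkeeping in part (1)'s converse: making precise that "$nc$ and $m\alpha$ agree on all of $R$" follows from agreement on a normally generating set. The point to get right is that $nc - m\alpha$ is a \emph{homomorphism} when restricted to $R$ (this uses $\varpi|_R\equiv 1$, so $\delta c$ restricted to $R\times R$ is zero, and that $\alpha$ is a genuine homomorphism), and is invariant under conjugation by $F$ (since $c$ is a class function in $F$ and $\alpha$ factors through the abelianization); a homomorphism on $R$ that is $F$-conjugation-invariant and vanishes on a set whose $F$-conjugates generate $R$ must vanish identically on $R$. Everything else is a direct unwinding of the identities $\delta c = -\varpi^*z$ and the class-function property, already established in the text.
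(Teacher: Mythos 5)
Your argument is correct and is essentially the paper's own: the proposition is deduced from Proposition~\ref{prop:comb.cri} by using the computation $c(r_{k,\ell})=0$ on the Artin relators (together with connectedness of $\mathcal{G}$) to force all the $m_i$ to share a common value $m$, so that $\sum_i m_i e_i^*$ collapses to $m\alpha$; your extra care in the converse (that $nc-m\alpha$ is a conjugation-invariant homomorphism on $R$, hence determined by its values on a normal generating set) is exactly the point the paper leaves implicit. No gaps.
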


For example, the mapping class group admits a presentation as the quotient
of an Artin group where the relation $a_ia_ja_i=a_ja_ia_j$ corresponds to the braid
relation among two Dehn twists. Thus we can apply this proposition.

\subsection{Meyer's theorems}

Using the combinatorial criterion in the previous section, Meyer determined
the order of the cohomology class $[\tau_g]\in H^2(\mathcal{M}_g;\mathbb{Z})$.

\begin{theorem}[Meyer \cite{Mey2}, Satz 2]
\label{thm:Satz2}
The order of $[\tau_1]$ is 3, the order of $[\tau_2]$ is 5, and
the order of $[\tau_g]$ is infinite if $g\ge 3$.
\end{theorem}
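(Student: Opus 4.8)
The plan is to derive everything from the combinatorial criterion of Proposition \ref{prop:Art}. Fix, for each $g$, a finite presentation of $\mathcal{M}_g$ as the quotient of an Artin group $Art(\mathcal{G})$ by a set of relators $\{r_j\}_{j\in J}$, with the generators taken to be Dehn twists along the curves of a chain (for $g=1,2$) or of Wajnryb's graph (for $g\ge3$), so that the braid relations among adjacent twists are exactly the defining relations of $Art(\mathcal{G})$; such presentations exist, as remarked after Proposition \ref{prop:Art}. By Proposition \ref{prop:Art}(1), $n[\tau_g]=0$ in $H^2(\mathcal{M}_g;\mathbb{Z})$ if and only if there is an integer $m$ with $n\,c(r_j)=m\,\alpha(r_j)$ for every $j\in J$, where $\alpha$ is the exponent-sum homomorphism and $c$ the canonical $1$-cochain attached to $\tau_g$. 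Because $\alpha$ is conjugation invariant and $c$ is a class function, every braid relator and every commutator relator $r$ has $\alpha(r)=c(r)=0$ (the computation $c(r_{k,\ell})=0$ preceding Proposition \ref{prop:Art} is the prototype), so these contribute nothing and the order of $[\tau_g]$ is governed by the finitely many ``genuine'' relators. For those, $\alpha(r_j)$ is immediate, and $c(r_j)=\sum_k\tau_g(\varpi(x_1\cdots x_{k-1}),\varpi(x_k))$ is to be computed via Meyer's algebraic description of $\tau_g$ as the pull-back along $\rho\colon\mathcal{M}_g\to Sp(2g;\mathbb{Z})$ of $\tau_g^{\rm sp}$: each summand is then the signature of the explicit form $\langle\ ,\ \rangle_{A,B}$ on $V_{A,B}$, for $A,B$ the symplectic matrices of the two Dehn twists involved.

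For $g=1$, $\mathcal{M}_1\cong SL(2;\mathbb{Z})$ is the quotient of $B_3=\langle\sigma_1,\sigma_2\mid\sigma_1\sigma_2\sigma_1=\sigma_2\sigma_1\sigma_2\rangle$ by the single relator $r=(\sigma_1\sigma_2\sigma_1)^4$, so $\alpha(r)=12$; since $\rho$ is an isomorphism here, $c(r)$ is a sum of twelve signatures of forms on spaces of dimension $\le2$, and I expect the evaluation to give $c(r)=\pm4$, whence $\gcd(c(r),12)=4$ and the least $n$ with $12\mid n\,c(r)$ equals $3$. For $g=2$ one uses the Birman--Hilden presentation on chain generators $\zeta_1,\dots,\zeta_5$: besides braid relators the genuine relators are the chain relator $r_C=(\zeta_1\zeta_2\zeta_3\zeta_4\zeta_5)^6$ with $\alpha(r_C)=30$, the hyperelliptic relator $r_H=w^2$ with $w=\zeta_1\zeta_2\zeta_3\zeta_4\zeta_5^2\zeta_4\zeta_3\zeta_2\zeta_1$ and $\alpha(r_H)=20$, and the relators $[w,\zeta_i]$ expressing that $w$ is central in $\mathcal{M}_2$, for which $\alpha=c=0$ (this uses the centrality of $w$ exactly as in the $r_{k,\ell}$ computation). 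Evaluating $c(r_C)$ and $c(r_H)$ along the same lines, I expect $c(r_C)=\pm6$ (which is $30$ times Endo's local signature $-1/5$ of a nonseparating vanishing cycle, all thirty of them being nonseparating) and $c(r_H)=\pm4$ with the same sign; these are compatible, and the smallest $n$ meeting both divisibility conditions is $5$.

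For $g\ge3$ it suffices to show that $[\tau_g]\ne0$ in $H^2(\mathcal{M}_g;\mathbb{Q})$, equivalently (by the last assertion of Proposition \ref{prop:comb.cri}) that the linear system $c(r_j)=\sum_i m_i e_i^*(r_j)$ has no rational solution, equivalently $ev([\tau_g])\ne0$, equivalently (by Meyer's construction) that some $\Sigma_g$-bundle over a closed oriented surface has nonzero signature. Concretely, for two relators $r,r'$ the word $w_{r,r'}:=r^{\alpha(r')}(r')^{-\alpha(r)}$ lies in $R\cap[F,F]$ and $c(w_{r,r'})=\alpha(r')\,c(r)-\alpha(r)\,c(r')$ equals $ev([\tau_g])$ on the class of the $\Sigma_g$-bundle over a closed surface built from $w_{r,r'}$. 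For $g\ge4$ a bundle of nonzero signature is furnished by the Atiyah--Kodaira construction \cite{Ati1} \cite{Kod}, which already settles those cases. For $g=3$ Meyer verified the needed inequality combinatorially, using the lantern relation, which holds in $\mathcal{M}_g$ for $g\ge3$ with all its twists along nonseparating curves, so that it gives a relator $r_L$ with $\alpha(r_L)=1$; paired with the chain relator $r_C$ this would force $c(r_C)=c(r_L)\,\alpha(r_C)$, and the evaluations of $c(r_L)$ and $c(r_C)$ show this fails. In each case $ev([\tau_g])\ne0$, so $n[\tau_g]\ne0$ for all $n\ge1$ and $[\tau_g]$ has infinite order.

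The step I expect to be the main obstacle is the explicit evaluation of the integers $c(r_j)$ for the genuine relators: one must determine the symplectic matrices $\rho(\zeta_i)$ of the generating Dehn twists, expand each relator into a long word, and sum the signatures of the (at most $2g$-dimensional) forms $\langle\ ,\ \rangle_{A,B}$ --- all with fixed, compatible orientation and sign conventions, for it is precisely these conventions together with the word lengths of the relators that produce the exact numbers $3$ and $5$. By contrast the infinite-order statement for $g\ge3$ is comparatively soft: it requires only that two of these integers fail a single divisibility relation, equivalently that one surface bundle over a surface have nonzero signature.
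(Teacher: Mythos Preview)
Your plan follows the route the paper attributes to Meyer: apply Proposition~\ref{prop:Art} to the standard presentation of $SL(2;\mathbb{Z})$ and the Birman--Hilden presentation of $\mathcal{M}_2$, and for $g\ge3$ produce relations showing $ev([\tau_g])\ne0$. The one variation --- citing Atiyah--Kodaira for $g\ge4$ rather than arguing combinatorially throughout --- is harmless; the paper's Remark after the theorem notes that Wajnryb's presentation now makes the combinatorial argument uniform.

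A caution on the step you rightly flag as the crux. Your heuristic ``$c(r_C)=30$ times the local signature of a nonseparating vanishing cycle'' conflates two different quantities. For a relator $r$ written in positive Artin generators, $c(r)$ is the signature of the \emph{smooth} $\Sigma_g$-bundle over the planar surface whose boundary monodromies are the letters of $r$; by Proposition~\ref{prop:s(E)} (once $\phi_g$ is known) this equals $\alpha(r)\cdot\phi_g(t_C)$, whereas Matsumoto's local signature of a Lefschetz node differs from $\phi_g(t_C)$ by the signature of the nodal neighborhood. Since $\phi_g(t_C)=(g{+}1)/(2g{+}1)$ for nonseparating $C$, the actual values are $c(r)=8$ for $g=1$ and $c(r_C)=18$, $c(r_H)=12$ for $g=2$, not $\pm4$, $\pm6$, $\pm4$. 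Your guesses happen also to yield orders $3$ and $5$ (in each case the ratio $c(r_j)/\alpha(r_j)$ still has denominator $2g{+}1$ in lowest terms), but the heuristic is not a reliable shortcut. In a genuine proof one cannot invoke $\phi_g$, of course, so $c(r)$ must be evaluated directly as a sum of signatures of the forms $\langle\,,\,\rangle_{A,B}$ via $\tau_g^{\rm sp}$; you are correct that this bookkeeping is where the work lies.
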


To settle the case $g=1$ and $2$, Meyer used
a classical presentation of $\mathcal{M}_1\cong SL(2;\mathbb{Z})$
and a presentation of $\mathcal{M}_2$ by Birman-Hilden \cite{BH}.
For $g\ge 3$, no finite presentation of $\mathcal{M}_g$ was known
at that time. Still, using some of the known relations and showing that
$[\tau_g]$ is divisible by 4, Meyer proved that the image of $ev([\tau_g])$ is $4\mathbb{Z}$.
We remark that by the Hirzebruch signature formula, we have
$e_1=3[\tau_g]\in H^2(\mathcal{M}_g;\mathbb{Z})$.

\begin{remark}
Nowadays several finite presentations of $\mathcal{M}_g$ for $g\ge 3$ are known.
Using one of them, say the one due to Wajnryb \cite{Waj}, one can directly show
that the image of $ev([\tau_g])$ is $4\mathbb{Z}$.
\end{remark}

The following is an immediate consequence of Theorem \ref{thm:Satz2}.

\begin{theorem}[Meyer \cite{Mey2}, Satz 3]
\label{thm:Satz3}
\begin{enumerate}
\item If $g\le 2$, the signature of the total space of any $\Sigma_g$-bundle
over a closed oriented surface is zero. 
\item If $g\ge 3$, the signature of the total space of a $\Sigma_g$-bundle over a closed oriented surface
is a multiple of $4$. Conversely, for any $g\ge 3$ and $n\in 4\mathbb{Z}$, there exist a $\Sigma_g$-bundle
$E\to B$ over a closed oriented surface with ${\rm Sign}(E)=n$.
\end{enumerate}
\end{theorem}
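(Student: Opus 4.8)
The plan is to derive everything from Theorem~\ref{thm:Satz2} together with the relation $e_1 = 3[\tau_g]$ and the Hirzebruch signature formula, using only the basic properties of $\tau_g$ established in Lemma~\ref{lem:tau} and the construction of $E(f_1,\ldots,f_{n-1})$ over pairs of pants. For the signature computation, I would first recall that if $B$ is a closed oriented surface of genus $h$ and $\pi\colon E\to B$ is a $\Sigma_g$-bundle with monodromy $\chi\colon\pi_1(B)\to\mathcal{M}_g$, then cutting $B$ into $2h-2$ pairs of pants (for $h\ge 2$; the cases $h=0,1$ being trivial since then $\pi_1(B)$ is abelian-or-trivial and one checks directly that $\mathrm{Sign}(E)=0$) and applying Novikov additivity exactly as in the sketch of Lemma~\ref{lem:tau}(1) expresses $\mathrm{Sign}(E)$ as a sum of values of the cocycle $\tau_g$ on the images under $\chi$ of the standard generators $a_1,b_1,\ldots,a_h,b_h$ of $\pi_1(B)$. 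In other words, $\mathrm{Sign}(E) = \langle [\tau_g], \chi_*[B]\rangle$ under the pairing $H^2(\mathcal{M}_g;\mathbb{Z})\times H_2(\mathcal{M}_g;\mathbb{Z})\to\mathbb{Z}$, where $[B]\in H_2(B;\mathbb{Z})$ is the fundamental class: this is precisely the content of the bordism description of $\mathrm{Sign}$ recalled in the Prehistory subsection.

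Given that identification, part~(1) is immediate: for $g\le 2$ the class $[\tau_g]$ is torsion by Theorem~\ref{thm:Satz2}, hence $\langle[\tau_g],\chi_*[B]\rangle = 0$ since the target $\mathbb{Z}$ is torsion-free. For part~(2), divisibility by $4$: I would invoke that the image of the evaluation map $ev([\tau_g])\colon H_2(\mathcal{M}_g;\mathbb{Z})\to\mathbb{Z}$ is exactly $4\mathbb{Z}$ for $g\ge 3$, which is asserted in the discussion following Theorem~\ref{thm:Satz2} (Meyer proved $[\tau_g]$ is divisible by $4$ and that $4$ is attained; the divisibility also follows from $e_1 = 3[\tau_g]$ being $12$-divisible by the Hirzebruch formula, so $[\tau_g]$ is $4$-divisible since $3$ is coprime to $4$). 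Thus $\mathrm{Sign}(E)=\langle[\tau_g],\chi_*[B]\rangle\in 4\mathbb{Z}$ for every $\Sigma_g$-bundle over a closed oriented surface.

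For the converse in part~(2), I need, for each $g\ge 3$, a single $\Sigma_g$-bundle $E_0\to B_0$ over a closed oriented surface with $\mathrm{Sign}(E_0)=4$; then fiber-connected-summing copies along the fiber (equivalently, taking the monodromy representation $\pi_1(B_0\#\cdots\# B_0)\to\mathcal{M}_g$ obtained by juxtaposing $k$ copies) and using Novikov additivity yields $\mathrm{Sign} = 4k$ for any $k>0$, while reversing the orientation of $B_0$ gives $-4$ and hence all of $4\mathbb{Z}$. The existence of such an $E_0$ for $g\ge 3$ follows from the fact that $ev([\tau_g])$ surjects onto $4\mathbb{Z}$: choose a class $\sigma\in H_2(\mathcal{M}_g;\mathbb{Z})$ with $ev([\tau_g])(\sigma)=4$, realize it (via $H_2(\mathcal{M}_g;\mathbb{Z})\cong H_2(B\mathcal{M}_g;\mathbb{Z})\cong\Omega_2(B\mathrm{Diff}^+(\Sigma_g))$) by a map from a closed oriented surface $B_0$, and pull back the universal bundle.

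The main obstacle is the converse direction, and specifically making the passage from ``$ev([\tau_g])$ has image $4\mathbb{Z}$'' (a cohomological statement about the mapping class group) to an \emph{honest smooth bundle} over a \emph{surface}: one must check that the relevant homology class of $\mathcal{M}_g$ is carried by a surface (true, since $\Omega_2 \cong H_2$ as recalled in the text) and that the resulting bundle's signature equals the evaluation of $[\tau_g]$ — which is exactly the pair-of-pants/Novikov-additivity identity from the first paragraph. All the genuinely hard input — the computation that the image is $4\mathbb{Z}$ for $g\ge 3$, ultimately resting on an explicit presentation of $\mathcal{M}_g$ (e.g.\ Wajnryb's) or on the Atiyah--Kodaira examples combined with the $4$-divisibility — is already quoted in the excerpt, so the proof here is a matter of assembling these pieces.
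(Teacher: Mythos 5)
Your proof is correct and follows essentially the same route as the paper, which states the theorem as an ``immediate consequence'' of Theorem~\ref{thm:Satz2} together with the quoted fact that the image of $ev([\tau_g])$ is $4\mathbb{Z}$ for $g\ge 3$; your pants-decomposition/Novikov-additivity identification of $\mathrm{Sign}(E)$ with $\langle[\tau_g],\chi_*[B]\rangle$ is exactly the implicit bridge. One small caveat: the parenthetical claim that the $4$-divisibility ``also follows from $e_1=3[\tau_g]$ being $12$-divisible by the Hirzebruch formula'' is not justified by anything in the paper (the $12$-divisibility of $e_1$ is a separate nontrivial fact), but it is not needed since your main justification is Meyer's computation quoted in the text.
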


As a consequence of Theorem \ref{thm:Satz2}, there exist 1-cochains
$\phi_1\colon \mathcal{M}_1 \to (1/3)\mathbb{Z}$ and
$\phi_2\colon \mathcal{M}_2 \to (1/5)\mathbb{Z}$ such that
$\delta \phi_1=\tau_1$ and $\delta \phi_2=\tau_2$. Here
for a 1-cochain $\phi\colon G\to A$ with coefficient in an abelian group
$A$, its {\it coboundary} $\delta \phi$ is a map from $G\times G$ to $A$ given by
$\delta \phi(x,y)=\phi(x)-\phi(xy)+\phi(y)$
(for terminologies of cohomology of groups, see for example, \cite{Bro}).
Thus the condition $\delta \phi_g=\tau_g$ ($g=1$ or $2$) is equivalent to
\begin{equation}
\label{eq:dphi}
\tau_g(x,y)=\phi_g(x)-\phi_g(xy)+\phi_g(y), \quad x,y\in \mathcal{M}_g.
\end{equation}
Moreover, since $H^1(\mathcal{M}_1;\mathbb{Q})=H^1(\mathcal{M}_2;\mathbb{Q})=0$,
such 1-cochains are unique and characterized by (\ref{eq:dphi}).
The 1-cochain $\phi_1$ (resp. $\phi_2$) is called the {\it Meyer function of
genus 1} (resp. {\it of genus 2}).

The following lemma can be directly proved by Lemma \ref{lem:tau} and (\ref{eq:dphi}).

\begin{lemma}
\label{lem:phi}
The Meyer functions $\phi_1$ and $\phi_2$ satisfy the following properties:
for $x,y\in \mathcal{M}_g$ ($g=1$ or $2$),
\begin{enumerate}
\item $\phi_g(1)=0$;
\item $\phi_g(x^{-1})=-\phi_g(x)$;
\item $\phi_g(yxy^{-1})=\phi_g(x)$.
\end{enumerate}
\end{lemma}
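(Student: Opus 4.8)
The plan is to derive each property directly from the defining relation $\delta\phi_g=\tau_g$, rewritten as (\ref{eq:dphi}), together with the corresponding properties of $\tau_g$ collected in Lemma \ref{lem:tau}, exactly as the sentence preceding the statement promises. Since the three assertions concern a single genus $g\in\{1,2\}$ and nowhere use the specific value of $g$, I would suppress the subscript and simply manipulate the cochain identity.

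For (1), I would evaluate (\ref{eq:dphi}) at $x=y=1$: this gives $\tau_g(1,1)=\phi_g(1)-\phi_g(1)+\phi_g(1)=\phi_g(1)$, while $\tau_g(1,1)=0$ by Lemma \ref{lem:tau}(2); hence $\phi_g(1)=0$. For (2), I would put $y=x^{-1}$ in (\ref{eq:dphi}): the left side is $\tau_g(x,x^{-1})=0$ by Lemma \ref{lem:tau}(2), and the right side is $\phi_g(x)-\phi_g(1)+\phi_g(x^{-1})=\phi_g(x)+\phi_g(x^{-1})$ using part (1); therefore $\phi_g(x^{-1})=-\phi_g(x)$. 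For (3), the cleanest route is to observe that $\phi_g$ being a homogeneous quasimorphism-type primitive forces conjugation invariance, but concretely I would argue as follows. Since $\delta\phi_g=\tau_g$, the coboundary $\delta(\phi_g - c_x\phi_g)$, where $c_x\phi_g(y):=\phi_g(xyx^{-1})$, equals $\tau_g - c_x^*\tau_g$; by Lemma \ref{lem:tau}(5) the cocycle $\tau_g$ is conjugation invariant, so this difference is the zero 2-cocycle, meaning $\phi_g - c_x\phi_g$ is a homomorphism $\mathcal{M}_g\to\mathbb{Q}$. Because $H^1(\mathcal{M}_g;\mathbb{Q})=0$ (as noted just before the lemma), the only such homomorphism is trivial, giving $\phi_g(xyx^{-1})=\phi_g(y)$ for all $x,y$.

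Alternatively, and perhaps preferably for a self-contained calculation, (3) can be obtained purely algebraically: expand $\phi_g(xyx^{-1})$ by applying (\ref{eq:dphi}) twice to split $xyx^{-1}=(xy)(x^{-1})$ and then $xy=(x)(y)$, collect the resulting $\tau_g$ terms, and simplify them using the symmetry $\tau_g(f_1,f_2)=\tau_g(f_2,f_1)$ from Lemma \ref{lem:tau}(4), the vanishing $\tau_g(f,f^{-1})=0$ from (2), and part (2) of the present lemma; the $\phi_g$-terms telescope and one is left with $\phi_g(xyx^{-1})=\phi_g(y)$.

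The main obstacle is purely bookkeeping in part (3): one must be careful with the order of multiplication (the paper's convention is that $f_1\circ f_2$ applies $f_2$ first) and with which pairs the cocycle identity is applied to, so that the $\tau_g$-contributions genuinely cancel after using symmetry and the identities in Lemma \ref{lem:tau}. Parts (1) and (2) are immediate. I expect the cleanest exposition to present (1) and (2) by direct substitution and then do (3) either via the $H^1=0$ argument or the telescoping computation, whichever the author finds shorter in the surrounding text.
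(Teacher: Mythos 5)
Your proposal is correct and matches the paper, which offers no written proof at all but states that the lemma ``can be directly proved by Lemma \ref{lem:tau} and (\ref{eq:dphi})'' --- exactly the substitutions and the uniqueness/telescoping arguments you give (your first route for (3) is just the uniqueness of $\phi_g$ applied to the conjugated cochain). One small point: in the telescoping route for (3), reducing $\tau_g(x,y)+\tau_g(xy,x^{-1})$ to zero also requires the cocycle identity of Lemma \ref{lem:tau}(1) (applied to the triple $(x^{-1},x,y)$), not only the symmetry and vanishing properties you list, though all the needed facts are in that lemma.
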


Consider a surface bundle over a compact oriented surface. Then the values of $\phi_g$
around a boundary circle (which is well-defined by Lemma \ref{lem:phi} (3)) is interpreted
as signature defects.

\begin{proposition}
\label{prop:s(E)}
Suppose $g=1$ or $2$ and let $\pi\colon E\to B$ be a $\Sigma_g$-bundle over a compact oriented surface $B$
with boundary components $\partial B_i$, $i\in I$. Then
$${\rm Sign}(E)=\sum_{i\in I} \phi_g(x_i),$$
where $x_i\in \mathcal{M}_g$ is the monodromy along the boundary component $\partial B_i$ with
the counter-clockwise orientation. 
\end{proposition}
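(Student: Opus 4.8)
The plan is to cut the base surface into pairs of pants, apply Novikov additivity of the signature (as in the proof of Lemma~\ref{lem:tau}), and then collapse the resulting sum of values of $\tau_g$ using the defining relation (\ref{eq:dphi}) together with the invariance properties of $\phi_g$ in Lemma~\ref{lem:phi}.

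First I would reduce to a convenient case. We may assume $B$ is connected, since both sides of the claimed identity are additive over connected components. If $B$ is a disk or an annulus then $\pi\colon E\to B$ is trivial, so ${\rm Sign}(E)=0$; in the disk case the unique boundary monodromy is trivial, and in the annulus case the two boundary monodromies (with counter-clockwise orientation) are a mapping class and its inverse, so $\sum_{i\in I}\phi_g(x_i)=0$ by Lemma~\ref{lem:phi}~(1),(2) and the formula holds. If $B$ is closed the formula is the content of Theorem~\ref{thm:Satz3}~(1). Hence we may assume $B$ has nonempty boundary and is neither a disk nor an annulus, so $\chi(B)<0$ and $B$ decomposes into $N=-\chi(B)\ge 1$ pairs of pants $P_1,\dots,P_N$ glued along a finite family $\mathcal{C}$ of disjoint circles embedded in the interior of $B$. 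Each $C\in\mathcal{C}$ is a boundary curve of exactly two pieces, say $P_\alpha$ and $P_\beta$ (possibly $\alpha=\beta$), and the boundary orientations $C$ inherits from $P_\alpha$ and from $P_\beta$ are opposite.

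Next I would analyze a single piece. Fix basepoints and connecting arcs so that the monodromy of $\pi$ is defined along every boundary curve of every $P_\alpha$ and along every $C\in\mathcal{C}$. The restriction of $\pi$ to $P_\alpha$ is a $\Sigma_g$-bundle over a pair of pants, hence (see \S 1) isomorphic to $E(f_1^\alpha,f_2^\alpha)$, where $f_1^\alpha,f_2^\alpha\in\mathcal{M}_g$ are the monodromies along two of the three boundary loops $\ell_1,\ell_2,\ell_3$ of $P_\alpha$; since $\ell_1\ell_2\ell_3=1$ in $\pi_1(P_\alpha)$, the third counter-clockwise boundary monodromy is $f_3^\alpha=(f_1^\alpha f_2^\alpha)^{-1}$. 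By Definition~\ref{dfn:sig-cocycle}, ${\rm Sign}(E|_{P_\alpha})=\tau_g(f_1^\alpha,f_2^\alpha)$, and by (\ref{eq:dphi}) together with Lemma~\ref{lem:phi}~(2) this equals
\[
\phi_g(f_1^\alpha)-\phi_g(f_1^\alpha f_2^\alpha)+\phi_g(f_2^\alpha)=\phi_g(f_1^\alpha)+\phi_g(f_2^\alpha)+\phi_g(f_3^\alpha).
\]
Reassembling $B$ from the $P_\alpha$ along the circles of $\mathcal{C}$ and applying Novikov additivity repeatedly (each gluing being along the $\Sigma_g$-bundle over a circle $E|_C$, with reversed orientation) gives ${\rm Sign}(E)=\sum_{\alpha=1}^N\bigl(\phi_g(f_1^\alpha)+\phi_g(f_2^\alpha)+\phi_g(f_3^\alpha)\bigr)$.

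Finally I would regroup the terms on the right by the curve of $B$ from which they arise, using Lemma~\ref{lem:phi}~(3) to see that $\phi_g$ of the monodromy along a given curve is independent of the auxiliary choices (basepoint, connecting arc). An interior curve $C\in\mathcal{C}$ appears as a counter-clockwise boundary curve once in $P_\alpha$ and once in $P_\beta$ with opposite orientations, contributing $\phi_g(x_C)+\phi_g(x_C^{-1})=0$ by Lemma~\ref{lem:phi}~(2). A boundary curve $\partial B_i$ of $B$ appears exactly once, contributing $\phi_g(x_i)$. Hence the sum collapses to $\sum_{i\in I}\phi_g(x_i)$, as claimed. The main obstacle is the orientation and basepoint bookkeeping of the third paragraph: one must check that a gluing circle really carries mutually inverse monodromies when viewed from its two adjacent pants---so that the corresponding terms cancel---and that the three counter-clockwise boundary monodromies of each pair of pants can be realized simultaneously and are compatible with the relation $\ell_1\ell_2\ell_3=1$; once the conventions are pinned down (as in \S 1 and the proof of Lemma~\ref{lem:tau}), the Novikov additivity and the telescoping are routine.
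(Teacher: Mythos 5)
Your proof is correct and follows essentially the same route as the paper's own (three-line) sketch: pants decomposition of $B$, Novikov additivity, and the coboundary relation (\ref{eq:dphi}), with the interior gluing circles cancelling via Lemma \ref{lem:phi}. One small slip in a base case: a $\Sigma_g$-bundle over an annulus need not be trivial (its monodromy around the core circle can be any mapping class), but ${\rm Sign}(E)=0$ there anyway since $E$ is a mapping torus times an interval and hence has vanishing intersection form on $H_2$, so the conclusion of that step stands.
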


\begin{proof}[sketch of proof]
Take a pants decomposition of $B$. By the Novikov additivity of the signature, ${\rm Sign}(E)$ is
the sum of the signatures of the components, which is expressed in terms of $\tau_g$.
Using (\ref{eq:dphi}), we obtain the formula.
\end{proof}

Meyer extensively studied the function $\phi_1$ and gave its explicit formula.
Note that the mapping class group $\mathcal{M}_1$ is isomorphic to $SL(2;\mathbb{Z})
=Sp(2;\mathbb{Z})$ by the homomorphism (\ref{eq:rho}).
To state his result, let us prepare some notations.
The {\it Rademacher function} \cite{Rad} is a map
$\Psi\colon SL(2;\mathbb{Z}) \to \mathbb{Q}$ defined by
$$\Psi \left( \left( \begin{array}{cc} a & b \\ c & d \end{array} \right) \right)=
\left\{
\begin{aligned}
& \displaystyle\frac{a+d}{c}-12 {\rm sign}(c)s(a,c)-3{\rm sign}(c(a+d)) & {\rm if\ } c\neq 0, \\
& \displaystyle\frac{b}{d} & {\rm if\ } c=0.
\end{aligned}
\right.
$$
Here ${\rm sign}(x)\in \{0, \pm 1\}$ is the sign of $x$ if $x\neq 0$,
$0$ if $x=0$, and $s(a,c)$ is the {\it Dedekind sum}
$$s(a,c):=\sum_{k {\rm mod\ } |c|} \left( \left( \frac{ak}{c} \right) \right)
\left( \left( \frac{k}{c} \right) \right)$$
where
$$(( x ))=
\left\{
\begin{aligned}
& x-[x]-\frac{1}{2} & {\rm if\ } x\in \mathbb{R} \setminus \mathbb{Z}, \\
& 0 & {\rm if\ } x\in \mathbb{Z}
\end{aligned} \right.
$$
($[x]$ denotes the integer part of $x$). Also, for $\alpha
=\left( \begin{array}{cc} a & b \\ c & d \end{array}\right) \in SL(2;\mathbb{Z})$,
set $\sigma(\alpha)=\tau_1(\alpha,-1)$, which by a direct computation turns out to be
the signature of the symmetric matrix
$\left( \begin{array}{cc} -2c & a-d \\ a-d & 2b \end{array} \right)$.

\begin{theorem}[Meyer \cite{Mey2}, Satz 4]
\label{thm:Satz4}
For any $\alpha=\left( \begin{array}{cc} a & b \\ c & d \end{array}\right)
\in SL(2;\mathbb{Z})$, we have
$$\phi_1(\alpha)=-\frac{1}{3}\Psi(\alpha)+\sigma(\alpha)\cdot
\frac{1}{2}(1+{\rm sign}(a+d)).$$
In particular, if $a+d \neq 0,1,2$, then $\phi_1(\alpha)=-(1/3)\Psi(\alpha)$.
\end{theorem}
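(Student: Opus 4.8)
The plan is to recognise the right-hand side as \emph{the} primitive of the signature cocycle. Write $F\colon SL(2;\mathbb{Z})\to\mathbb{Q}$ for the function defined by the claimed formula, so that the assertion is $F=\phi_1$. By Theorem \ref{thm:Satz2} the class $[\tau_1]$ has order $3$, so a $(1/3)\mathbb{Z}$-valued $1$-cochain $\phi_1$ with $\delta\phi_1=\tau_1$ exists, and it is unique because $H^1(\mathcal{M}_1;\mathbb{Q})=\Hom(SL(2;\mathbb{Z}),\mathbb{Q})=0$ (the abelianisation of $SL(2;\mathbb{Z})$ being finite, $\cong\mathbb{Z}/12$). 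Hence it suffices to prove the single cocycle identity
\begin{equation*}
F(\alpha)-F(\alpha\beta)+F(\beta)=\tau_1(\alpha,\beta)\qquad(\alpha,\beta\in SL(2;\mathbb{Z})),
\end{equation*}
i.e.\ that $F$ is a primitive of $\tau_1$; equivalently, since $\tau_1=\delta\phi_1$ is already known, to observe that $F-\phi_1$ is a homomorphism $SL(2;\mathbb{Z})\to\mathbb{Q}$ and hence zero. So everything reduces to a direct verification of $\delta F=\tau_1$.

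For the verification I would write $F=-\tfrac13\Psi+\tfrac12 h$, where $h(\gamma):=\sigma(\gamma)\bigl(1+\operatorname{sign}(\operatorname{tr}\gamma)\bigr)$, and compute the three contributions to $\delta F=-\tfrac13\,\delta\Psi+\tfrac12\,\delta h$ separately. The coboundary of $\Psi$ is classical: the transformation law of the Rademacher function (equivalently, of $\log\eta$ under $SL(2;\mathbb{Z})$) expresses $\Psi(\alpha)-\Psi(\alpha\beta)+\Psi(\beta)$ explicitly through the signs of the lower-left entries $c_\alpha,c_\beta,c_{\alpha\beta}$ — it is $3\operatorname{sign}(c_\alpha c_\beta c_{\alpha\beta})$ when these are all non-zero, with modified expressions when one of them vanishes — and this is the point where Dedekind reciprocity enters. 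For $\delta h$, and for $\tau_1$ itself, I would use that $\sigma(\gamma)=\tau_1(\gamma,-I)$ is the signature of $\left(\begin{smallmatrix}-2c&a-d\\ a-d&2b\end{smallmatrix}\right)$, whose determinant equals $4-(\operatorname{tr}\gamma)^2$ once $ad-bc=1$ is used; thus $\sigma$ vanishes on hyperbolic elements, equals $\pm2$ (with sign $-\operatorname{sign}(c)$) on elliptic ones, and lies in $\{0,\pm1\}$ — read off directly — on parabolic/central ones, i.e.\ those with $\operatorname{tr}\gamma=\pm2$. Finally $\tau_1(\alpha,\beta)=\operatorname{Sign}\bigl(V_{\rho(\alpha),\rho(\beta)},\langle\ ,\ \rangle_{\rho(\alpha),\rho(\beta)}\bigr)$ is the signature of a symmetric form on a space of dimension at most $4$, and both $\dim V$ and the form are governed by which of $\alpha,\beta,\alpha\beta$ have $1$ as an eigenvalue (equivalently trace $2$) and by a Maslov-type invariant of the resulting Lagrangian data.

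With these computed, $\delta F=\tau_1$ becomes a finite check. I would organise it by the conjugacy type (elliptic / parabolic / hyperbolic, together with the sign of the trace) of each of $\alpha$, $\beta$, $\alpha\beta$, and by the signs of $c_\alpha,c_\beta,c_{\alpha\beta}$, using the conjugation invariance and oddness of both sides (Lemma \ref{lem:phi}) and the normalisations $\alpha\mapsto T^k\alpha T^\ell$ to cut down the number of cases; in each case one compares the explicit value of $\tau_1(\alpha,\beta)$ against $\tfrac12\bigl(h(\alpha)+h(\beta)-h(\alpha\beta)\bigr)-\tfrac13\bigl(\Psi(\alpha)-\Psi(\alpha\beta)+\Psi(\beta)\bigr)$. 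The last clause of the theorem then drops out: if $a+d\notin\{0,1,2\}$, then either $\alpha$ is hyperbolic, so $\sigma(\alpha)=0$, or $\operatorname{tr}\alpha\in\{-1,-2\}$, so $1+\operatorname{sign}(\operatorname{tr}\alpha)=0$; either way the second summand of $F$ vanishes and $\phi_1(\alpha)=-\tfrac13\Psi(\alpha)$.

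The step I expect to be the main obstacle is the bookkeeping of this case analysis, and especially the parabolic cases, where $\operatorname{tr}\alpha$, $\operatorname{tr}\beta$ or $\operatorname{tr}(\alpha\beta)$ equals $\pm2$: there $V_{\rho(\alpha),\rho(\beta)}$ acquires extra dimensions, the form $\langle\ ,\ \rangle$ becomes degenerate, and it is precisely in these cases that the correction term $\tfrac12 h$ is needed to repair $-\tfrac13\Psi$ — so those are the ones to pin down first. A second delicate point is purely bookkeeping of signs: Rademacher's transformation law, Meyer's normalisation of $\tau_1$, and the identification $\rho$ of $\mathcal{M}_1$ with $SL(2;\mathbb{Z})$ must all be aligned consistently. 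An alternative, more self-contained route would apply Proposition \ref{prop:Art} to the presentation $SL(2;\mathbb{Z})=\langle a,b\mid aba=bab,\ (ab)^6=1\rangle$ (with $a,b$ the two Dehn twists and $n=3$): this produces $\phi_1$ explicitly as a $\mathbb{Q}$-linear combination of a sum of values of $\tau_1$ along a word in $a,b$ and the exponent sum of that word, but recognising it as the Rademacher expression still demands essentially the same Dedekind-sum reciprocity, so I would keep the coboundary computation as the main line of argument.
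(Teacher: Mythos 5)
The paper itself contains no proof of this theorem (it is a survey): it only records that Meyer's argument rests on a cocycle identity for $\Psi$, behind which is the transformation law of $\log\eta$ under $SL(2;\mathbb{Z})$, and your proposal follows exactly that line --- reduce via the uniqueness of $\phi_1$ (from $H^1(\mathcal{M}_1;\mathbb{Q})=\Hom(SL(2;\mathbb{Z}),\mathbb{Q})=0$) to the single identity $\delta F=\tau_1$, then verify it using Rademacher's coboundary formula for $\Psi$ together with the explicit descriptions of $\tau_1$ and of $\sigma$. Your reduction and the stated preliminary facts all check out (the determinant $4-(a+d)^2$, the values of $\sigma$ by conjugacy type, and the derivation of the final clause), so the only caveat is that the finite case-by-case verification of $\delta F=\tau_1$ --- which is where essentially all of Meyer's actual work in Satz 4 lies, especially the parabolic cases where $V_{\rho(\alpha),\rho(\beta)}$ degenerates --- is laid out as a plan rather than executed.
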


Meyer's proof is based on a certain cocycle identity of $\Psi$, behind which
is the transformation law under $SL(2;\mathbb{Z})$ of the logarithm of the {\it Dedekind $\eta$-function}
$$\eta(\tau)=e^{\pi i \tau/12}\prod_{n=1}^{\infty}(1-e^{2\pi i n \tau}), \quad
\tau \in \{ z\in \mathbb{C}; {\rm Im}(z)>0 \}.$$
Atiyah \cite{Ati2} gave another proof of Theorem \ref{thm:Satz4} of more topological nature.

\subsection{Atiyah's theorem}
\label{sec:Atiyah}

Atiyah \cite{Ati2} showed that the value of $\phi_1$ on hyperbolic
elements coincides with various invariants. Recall that $\alpha \in SL(2;\mathbb{Z})$
is called {\it hyperbolic} if $|{\rm Tr}(\alpha)|>2$.

\begin{theorem}[Atiyah \cite{Ati2}]
\label{thm:car-pool}
For a hyperbolic element $\alpha \in SL(2;\mathbb{Z})$, the following quantities coincide.
\begin{enumerate}
\item $\phi_1(\alpha)$;
\item Hirzebruch's signature defect $\delta(\alpha)$;
\item the transformation low of the logarithm of the Dedekind $\eta$-function under $\alpha$;
\item the logarithmic monodromy of Quillen's determinant line bundle of the mapping torus of $\alpha$;
\item the value $L_{\alpha}(0)$ of the Shimizu L-function;
\item The Atiyah-Patodi-Singer invariant $\eta(\alpha)$ of the mapping torus of $\alpha$;
\item The adiabatic limit $\eta^0(\alpha)$.
\end{enumerate}
\end{theorem}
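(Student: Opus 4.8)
The plan is to establish the seven-fold coincidence as a chain of identifications, most links of which are supplied either by results already recorded above or by classical facts, with one genuinely new bridge --- between the combinatorial invariant $\phi_1$ and the spectral invariants --- forming the heart of the argument. Throughout, hyperbolicity of $\alpha=\left(\begin{smallmatrix} a & b \\ c & d\end{smallmatrix}\right)$ is used in the form $|a+d|=|\mathrm{Tr}(\alpha)|>2$, so in particular $a+d\notin\{0,1,2\}$; by the ``in particular'' clause of Theorem~\ref{thm:Satz4} this already gives $\phi_1(\alpha)=-\tfrac13\Psi(\alpha)$, removing the correction term once and for all.

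First I would dispose of the arithmetic links. The Rademacher function $\Psi$ is, by its very definition in terms of Dedekind sums, the $1$-cocycle on $SL(2;\mathbb{Z})$ governing the transformation of $\log\eta$ (Dedekind's functional equation, written behind Theorem~\ref{thm:Satz4}); hence $\phi_1(\alpha)=-\tfrac13\Psi(\alpha)$ identifies (1) with (3). For (5), one uses the classical expression of Dedekind sums as partial zeta values: reading off the Hirzebruch--Jung continued-fraction data of the cusp attached to the fixed points of $\alpha$ on the real-quadratic field $\mathbb{Q}(\sqrt{(a+d)^2-4})$, the quantity $-\tfrac13\Psi(\alpha)$ equals the value $L_\alpha(0)$ of the corresponding Shimizu $L$-function. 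Finally, Hirzebruch's theorem on cusp singularities of Hilbert modular surfaces identifies $L_\alpha(0)$ with the signature defect $\delta(\alpha)$ of the cusp whose link is the mapping torus $X_\alpha$, giving (2) $=$ (5). Thus (1), (2), (3), (5) are all equal.

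Next I would treat the analytic and geometric links by pivoting through the Atiyah--Patodi--Singer signature theorem. The mapping torus $X_\alpha$ fibers over $S^1$ with fiber $\Sigma_1$ and bounds a compact oriented $4$-manifold; by \S1 one may arrange a bounding $\Sigma_1$-bundle $E\to B$ over a compact oriented surface $B$ with boundary monodromies among which $\alpha$ occurs. Applying the APS signature theorem to $E$, with the signature operator coupled to the flat local system $\mathcal{H}^1(\Sigma_1;\mathbb{R})$ so that the boundary contribution is the reduced $\eta$-invariant, yields $\mathrm{Sign}(E)=\int_E L-\sum_i\eta(x_i)$, while Proposition~\ref{prop:s(E)} computes the left-hand side as $\sum_i\phi_1(x_i)$. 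Since for genus one the closed-up bundles have vanishing signature (Theorem~\ref{thm:Satz3}(1)), the Hirzebruch integral collapses, and comparing the two expressions over enough pants-type configurations isolates the individual values, giving $\phi_1(\alpha)=-\eta(\alpha)$ up to the standard sign; this identifies (1), (2), (6). The adiabatic limit (7) then follows from the Bismut--Cheeger--Dai formula for $\eta$-invariants of fibrations over $S^1$: the $\eta$-form integrand reproduces exactly the same $\phi_1$-term and the residual integral over the one-dimensional base vanishes, so $\eta^0(\alpha)=\eta(\alpha)$. For (4), Quillen's computation of the Quillen metric on $\det\bar\partial$ over a family of elliptic curves --- where the norm of the determinant section on $\mathbb{C}/(\mathbb{Z}+\mathbb{Z}\tau)$ is, up to a universal constant, $(\mathrm{Im}\,\tau)^{1/2}|\eta(\tau)|^2$ --- shows that the logarithmic monodromy of this line bundle around the base circle of $X_\alpha$ is governed by the transformation cocycle of $\log\eta$, i.e.\ equals $-\tfrac13\Psi(\alpha)=\phi_1(\alpha)$, matching (3).

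I expect the main obstacle to be the third step: the bridge between the combinatorial invariant $\phi_1$ and the spectral invariants $\delta,\eta,\eta^0$. One must set up the APS index theorem in the twisted (local-system) setting on the bounding $4$-manifold, verify that the interior index density together with all boundary and corner contributions reduce precisely to the Novikov-additive expression of Proposition~\ref{prop:s(E)}, and --- most delicately --- reconcile every normalization: the factor $3$ coming from $e_1=3[\tau_1]$, the sign conventions for $\eta$ and for orientations, and the fact that the correction term $\tfrac12(1+\mathrm{sign}(a+d))$ of Theorem~\ref{thm:Satz4} must genuinely be absent here. Tracking where hyperbolicity is actually needed --- it guarantees $a+d\notin\{0,1,2\}$, that $\alpha$ has no eigenvalue $1$ so that no spectral-flow jump corrupts the $\eta$-invariant, and that the adiabatic and Quillen-holonomy computations are unobstructed --- is the thread that must be carried carefully through all seven identifications.
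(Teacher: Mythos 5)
The paper itself gives no proof of Theorem~\ref{thm:car-pool}: it is stated as a cited result of Atiyah \cite{Ati2}, and the surrounding text only explains what the invariants (6) and (7) mean. So your proposal has to be measured against Atiyah's argument rather than against anything in this survey. Your route --- a chain of direct identifications, pair by pair --- is genuinely different from Atiyah's. His mechanism is the one the survey itself records just after equation (2.4): since $H^1(SL(2;\mathbb{Z});\mathbb{Q})=0$, a function cobounding $\tau_1$ is \emph{unique}, so it suffices to show that each of the seven quantities is (or extends to, or is determined by) a function on $SL(2;\mathbb{Z})$ whose coboundary is the signature cocycle; all seven then coincide with $\phi_1$ for free, and every delicate normalization, sign, and metric-dependence issue is absorbed into checking a single cocycle identity for each invariant separately. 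You have this uniqueness principle available in the text but never invoke it, and as a result you are forced to carry out exactly the comparisons it is designed to avoid.

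That is where the genuine gap sits, in your third step. The Atiyah--Patodi--Singer theorem gives $\mathrm{Sign}(E)=\int_E L(p)-\eta(\partial E)$, where the boundary term is the $\eta$-invariant of the induced metric; it is \emph{not} an invariant of the monodromy class $\alpha$ alone, which is precisely why the adiabatic limit $\eta^0$ has to be introduced in Theorem~\ref{thm:phi=eta} and why (6) and (7) are listed as separate items needing proof of coincidence (for hyperbolic $\alpha$ one uses the canonical sol-metric on the mapping torus, and the equality $\eta(\alpha)=\eta^0(\alpha)$ is a statement to be proved, not a formal consequence). Your inference that ``the Hirzebruch integral collapses'' because closed genus-one bundles have vanishing signature (Theorem~\ref{thm:Satz3}(1)) is not valid as stated: $\int_E L(p)$ over a manifold with boundary is a metric-dependent quantity, not a cobordism invariant, so a vanishing theorem for closed total spaces says nothing about it; one must instead choose a connection adapted to the splitting $TE=T(E/B)\oplus\pi^*TB$ and compute the first Pontryagin form directly. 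Likewise, ``comparing the two expressions over enough pants-type configurations isolates the individual values'' is not an argument: Proposition~\ref{prop:s(E)} and APS each give one linear relation per bounding surface, and without controlling the metric dependence of each boundary $\eta$-term you cannot solve for $\eta(\alpha)$ individually. The arithmetic half of your plan --- (1)$=$(3) via the Rademacher cocycle, (3)$=$(5) via Dedekind sums as partial zeta values, (5)$=$(2) via Hirzebruch's cusp-resolution theorem --- is sound and is essentially how those links are established; but the bridge to the spectral invariants needs either the full weight of the metric-adapted APS computation or, better, Atiyah's uniqueness argument.
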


Since the invariants (6)(7) will appear again in \S \ref{sec:Var}, we give a brief explanation of these invariants here.
The Atiyah-Patodi-Singer invariant \cite{APS}, also called the $\eta$-invariant,
is a spectral invariant of a closed oriented odd dimensional Riemannian manifold
$(M,g)$ and is denoted by $\eta(M,g)$ or $\eta(M)$ shortly. Further, let $E$ and $B$ be closed oriented $C^{\infty}$-manifolds and
$\pi\colon E\to B$ a oriented $M$-bundle with the dimension of $E$ is divisible by 4.
Once a metric $g^{E/B}$ on the relative tangent bundle $T(E/B)$, a metric $g^B$ on $B$,
and a connection $\nabla$ on $TE$ are given, the metric on $E$ is given by $g^E:=g^{E/B}\oplus \pi^*g^E$
according to the decomposition $TE=T(E/B)\oplus \pi^*TB$ induced from $\nabla$.
Then the one parameter family of metrics on $E$ is defined by
$g^E_{\varepsilon}:=g^{E/B}\oplus \varepsilon^{-1}\pi^*g^B$, $\varepsilon \in \mathbb{R}_{>0}$.
By Bismut-Cheeger \cite{BC}, it is shown that the limit $\lim_{\varepsilon \to 0} \eta(E,g^E_{\varepsilon})$
exists. The limit is called the {\it adiabatic limit} of the $\eta$-invariants and is denoted by
$\eta^0(E)$. In Theorem \ref{thm:car-pool}, a suitable metric is chosen for the mapping torus of $\alpha$.

In fact, Atiyah also showed the following result, giving an analytic expression of the value of
$\phi_1$ on any element of $SL(2;\mathbb{Z})$.

\begin{theorem}[Atiyah \cite{Ati2}]
\label{thm:phi=eta}
For $\alpha\in SL(2;\mathbb{Z})$, we have
$\phi_1(\alpha)=\eta^0(\alpha)$.
\end{theorem}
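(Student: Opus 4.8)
The plan is to establish that $\eta^0$, like $\phi_1$, is a primitive of the signature cocycle $\tau_1$, and then to appeal to the vanishing of $H^1(\mathcal{M}_1;\mathbb{Q})$. As a preliminary step I would record that $\eta^0$, viewed as a function on $\mathcal{M}_1\cong SL(2;\mathbb{Z})$, shares the formal properties of Lemma \ref{lem:phi}: conjugate mapping classes have orientation-preservingly diffeomorphic mapping tori, and the auxiliary fibre metric and connection used to form the adiabatic family can be transported along such a diffeomorphism, so $\eta^0(\gamma\alpha\gamma^{-1})=\eta^0(\alpha)$; reversing the orientation of $M_\alpha$ simultaneously replaces $\alpha$ by $\alpha^{-1}$ and negates every $\eta$-invariant in the family, so $\eta^0(\alpha^{-1})=-\eta^0(\alpha)$, whence also $\eta^0(1)=0$.

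The heart of the matter is the identity
\begin{equation}
\tau_1(\alpha,\beta)=\eta^0(\alpha)+\eta^0(\beta)-\eta^0(\alpha\beta),\qquad \alpha,\beta\in SL(2;\mathbb{Z}),
\label{eq:deta}
\end{equation}
i.e. $\delta\eta^0=\tau_1$, the exact analogue of Proposition \ref{prop:s(E)} applied to a single pair of pants. To obtain it I would apply the Atiyah--Patodi--Singer signature theorem to the $T^2$-bundle $E(\alpha,\beta)\to P$, given an adiabatic metric $g^E_\varepsilon$ that is a Riemannian product near each of the three boundary mapping tori $M_\alpha$, $M_\beta$, $M_{(\alpha\beta)^{-1}}$. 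The decisive structural feature is that, since the monodromy of $E(\alpha,\beta)$ factors through the linear action of $SL(2;\mathbb{Z})$ on $T^2=\mathbb{R}^2/\mathbb{Z}^2$, the bundle may be taken to be \emph{flat}; hence the vertical tangent bundle $T(E/P)$ carries a flat connection, $\pi^*TP$ is trivial over the homotopy-one-dimensional base $P$, and $TE$ admits a flat connection. Consequently the bulk term $\int_{E(\alpha,\beta)}L_1$ in the APS formula transgresses, via Chern--Simons forms, to contributions supported on the boundary mapping tori, and letting $\varepsilon\to 0$ these combine with the boundary $\eta$-terms, by Bismut--Cheeger, into exactly the adiabatic invariants $\eta^0(\alpha),\eta^0(\beta),\eta^0((\alpha\beta)^{-1})$; since $\mathrm{Sign}(E(\alpha,\beta))=\tau_1(\alpha,\beta)$ is unchanged and $\eta^0((\alpha\beta)^{-1})=-\eta^0(\alpha\beta)$, this yields (\ref{eq:deta}). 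The overall sign is pinned down by consistency with the hyperbolic case of Theorem \ref{thm:car-pool}, where $\phi_1=\eta^0$ is already known.

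Granting (\ref{eq:deta}), the $1$-cochain $\phi_1-\eta^0$ on $\mathcal{M}_1$ satisfies $\delta(\phi_1-\eta^0)=\tau_1-\tau_1=0$, so it is a group homomorphism $SL(2;\mathbb{Z})\to\mathbb{R}$. Since $SL(2;\mathbb{Z})$ has finite abelianization $\mathbb{Z}/12\mathbb{Z}$, equivalently $H^1(\mathcal{M}_1;\mathbb{Q})=0$, this homomorphism must vanish, and therefore $\phi_1=\eta^0$.

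I expect the main obstacle to be the analytic content of the second step: controlling the local index density of the signature operator in the adiabatic limit, i.e. showing that, once $TE$ is flattened, the APS bulk term contributes nothing beyond the signature defects of the boundary pieces, and that those defects are precisely the $\eta^0(\gamma)$. Even granting the flat structure, this identification rests on the Bismut--Cheeger adiabatic-limit formula for the family of signature operators along the flat $2$-tori. An alternative that bypasses (\ref{eq:deta}) entirely is to check $\phi_1=\eta^0$ directly on the conjugacy classes not covered by Theorem \ref{thm:car-pool} --- the central elements $\pm I$, the finite-order elliptic classes, and the parabolic classes --- evaluating $\eta^0$ by explicit spectral analysis on the corresponding Euclidean, Nil, and Sol mapping tori and comparing with Meyer's formula for $\phi_1$ from Theorem \ref{thm:Satz4}; since both $\phi_1$ and $\eta^0$ are class functions, this suffices.
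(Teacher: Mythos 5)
The survey states this theorem without proof, citing Atiyah \cite{Ati2}, so there is no in-paper argument to compare against; judged on its own terms, your proposal reconstructs what is essentially the standard (and essentially Atiyah's) argument, and it is the same template that Iida's generalization in \S\ref{sec:theta} follows: show that the adiabatic limit $\eta^0$ cobounds the signature cocycle, then invoke $H^1(\mathcal{M}_1;\mathbb{Q})=0$ (finiteness of $SL(2;\mathbb{Z})^{\mathrm{ab}}$) to identify it with $\phi_1$. The reduction in your final step is correct and cheap; all the weight sits on the identity $\delta\eta^0=\tau_1$, which you assert rather than prove. Your identification of the ingredients there is right --- $E(\alpha,\beta)\to P$ is genuinely flat because the monodromy factors through the linear $SL(2;\mathbb{Z})$-action on $T^2$, the APS signature theorem applies for each $\varepsilon$ with a product-near-boundary metric, and the only issues are (i) that the bulk $L$-integral tends to zero in the adiabatic limit and (ii) that the boundary $\eta$-terms converge to the $\eta^0(\gamma)$'s; for (i) note that the degree-$4$ part of the limiting index density is forced to vanish since the fibre is flat and the base is $2$-dimensional, and (ii) is exactly Bismut--Cheeger (historically Atiyah handled this low-dimensional case by hand, as \cite{BC} postdates \cite{Ati2}). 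Two points deserve more care than you give them: the orientation/sign bookkeeping for the three boundary components $M_\alpha$, $M_\beta$, $M_{(\alpha\beta)^{-1}}$ should be done honestly rather than ``pinned down by consistency'' with Theorem \ref{thm:car-pool} (though that shortcut is legitimate once one knows $\phi_1\not\equiv 0$ on hyperbolic elements), and the conjugation-invariance of $\eta^0$ requires knowing that the adiabatic limit is independent of the auxiliary fibre metric, since $\gamma\in SL(2;\mathbb{Z})$ need not be an isometry of the flat torus; this independence is part of the Bismut--Cheeger/Atiyah setup but should be cited explicitly. Your alternative route (direct spectral computation on the elliptic and parabolic classes) would also work but is substantially more laborious, as the parabolic classes form infinitely many conjugacy classes of Nil-manifold mapping tori.
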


A generalization of this result to $\phi_2$ will be dealt in \S \ref{sec:theta}.

\section{Local signatures}

Consider a closed oriented 4-manifold $M$ admitting a fibration
$f\colon M\to B$ onto a closed oriented surface $B$. Under some conditions,
the signature of $M$ happens to {\it localize} to finitely many
singular fibers of $f$. This phenomenon is called the {\it localization
of the signature}, and has been studied from several point of view.
In this section we review some of these treatments, and recall an approach
using Meyer functions.  

\subsection{Local signatures and Horikawa index}
\label{subsec:Horikawa}

Let $E$ and $B$ be compact oriented $C^{\infty}$-manifolds of dimension 4 and 2
respectively, $f\colon E\to B$ a proper surjective $C^{\infty}$-map having
the structure of $\Sigma_g$-bundle outside of finitely many points
$\{b_i \}_{i\in I} \subset {\rm Int}(B)$.
We call such a triple $(E,f,B)$ a {\it fibered 4-manifold} (of genus $g$).
For $b\in B$, we denote by $\mathcal{F}_b$ the fiber germ of $f$ around $b$.
If $b\in B\setminus \{ b_i\}_{i\in I}$, $\mathcal{F}_b$ is called a {\it general fiber}.
If $b=b_i$ for some $i\in I$, $\mathcal{F}_b$ is called a {\it singular fiber}.

Typical examples of fibered 4-manifolds are elliptic surfaces and Lefschetz fibrations.
When we work with holomorphic category, then $E$ is a complex surface,
$B$ is a Riemann surface, and $f$ is a holomorphic map. In this case if we say,
for example, that $f\colon E\to B$ is a hyperelliptic fibration, then general fibers
are hyperelliptic Riemann surfaces.

Among the topological invariants of such $E$, the topological Euler number $\chi(E)$
is easy to compute. For simplicity we assume that $E$ and $B$ are closed,
and let $g(B)$ be the genus of $B$.
Let $\Delta_i \subset B$ be a small closed disk with center $b_i$
and we denote $E_i=f^{-1}(\Delta_i)$ and $E_0=f^{-1}(B\setminus \bigcup_i {\rm Int}(\Delta_i))$.
Since the topological Euler number is multiplicative in fiber bundles,
we have $\chi(E_0)=(2-2g)(2-2g(B)-|I|)$. Moreover, since $f$ is proper
we have $\chi(E_i)=\chi(f^{-1}(b_i))$. Thus
$$\chi(E)=(2-2g)(2-2g(B))+\sum_{b\in B}\varepsilon(\mathcal{F}_b),$$
where the number $\varepsilon(\mathcal{F}_b):=\chi(f^{-1}(b_i))-(2-2g)$ is called
the {\it topological Euler contribution}. In short, we can compute $\chi(E)$
by the contributions $\varepsilon(\mathcal{F}_b)$.

On the other hand, the signature of $E$ is not so easy to compute and
in general one cannot compute it from the data of singular fiber germs.
Nevertheless, under some conditions on the general fibers,
it happens that we can assign
a rational number $\sigma(\mathcal{F}_b)$ to each fiber $\mathcal{F}_b$
satisfying the following two conditions:
\begin{enumerate}
\item if $\mathcal{F}_b$ is a general fiber, then $\sigma(\mathcal{F}_b)=0$.

\item if $E$ is closed, then ${\rm Sign}(E)=\sum_{b\in B} \sigma(\mathcal{F}_b)$.
\end{enumerate}
The assignment $\sigma$ is called a {\it local signature}, and
when such phenomena happens, we say that the signature of $E$ is localized.

The first example of a local signature is the one for fibered 4-manifolds of genus 1
due to Y. Matsumoto \cite{Mat1}. He called such assignment a fractional signature.
Later he also gave a local signature for Lefschetz fibrations of genus 2 \cite{Mat2}.
In both the examples, he used the Meyer functions $\phi_1$ and $\phi_2$
to construct a local signature. See the next subsection for details.

In algebro-geometric setting, local signatures are closely related to an invariant
of fiber germs which originates in the work of Horikawa \cite{Hori1} \cite{Hori2}.
He studied global family of curves of genus 2 $f\colon E \to B$ and defined
an invariant $\mathcal{H}(\mathcal{F}_b)\ge 0$ to each fiber germ, and showed the equality
\begin{equation}
\label{H-index}
K_E^2=2\chi(\mathcal{O}_E)-6+6g(B)+\sum_{b\in B} \mathcal{H}(\mathcal{F}_b).
\end{equation}
Here $g(B)$ is the genus of $B$,
$K_E^2$ is the self intersection number of the canonical bundle of $E$,
and $\chi(\mathcal{O}_E)$ is the Euler characteristic number of the structure sheaf of $E$.
In the geography of complex surfaces of general type, one often studies complex surfaces
with the pair of specified numerical invariants $(K_E^2,\chi(\mathcal{O}_E))$.
Note that by the Hirzebruch signature formula ${\rm Sign}(E)=(1/3)(K_E^2-2\chi(E))$
and the Noether formula $\chi(\mathcal{O}_E)=(1/12)(K_E^2+\chi(E))$,
to fix $(K_E^2,\chi(\mathcal{O}_E))$ is equivalent to fix $({\rm Sign}(E),\chi(E))$. The inequality
$K_E^2 \ge 2\chi(\mathcal{O}_E)-6$ is called the {\it Noether inequality}, a
lower bound for the numerical invariants of complex surfaces of general type.
Thus $\mathcal{H}(\mathcal{F}_b)$ is regarded as a local contribution of each fiber germ
to the distance from the geographical lower bound for $(K_E^2,\chi(\mathcal{O}_E))$.
The invariant $\mathcal{H}(\mathcal{F}_b)$ is called the {\it Horikawa index}.

There are several situations in which the Horikawa index exists.
M. Reid \cite{Reid} defined it for fiber germs of non-hyperelliptic fibrations of genus 3.
This is generalized by Konno \cite{Konno2} to Clifford general fibrations of odd genus.

Arakawa and Ashikaga \cite{AA} introduced the Horikawa index for hyperelliptic fibrations,
which is regarded as a direct generalization of the work of Horikawa. Let $f\colon E\to B$
be a hyperelliptic fibration of genus $g$ with $B$ closed. They introduced an invariant 
$\mathcal{H}(\mathcal{F}_b)\ge 0$ for each fiber germ satisfying
\begin{equation}
\label{H-ind-hyp}
K_{E/B}^2=\frac{4(g-1)}{g}\chi_f+\sum_{b\in B}\mathcal{H}(\mathcal{F}_b ),
\end{equation}
where $K_{E/B}^2=K_S^2-8(g-1)(g(B)-1)$ and $\chi_f=\chi(\mathcal{O}_E)-(g-1)(g(B)-1)$.
Moreover, they defined a local signature for hyperelliptic fibrations of genus $g$ by
\begin{equation}
\label{eq:alg-loc}
\sigma_g^{{\rm alg}}(\mathcal{F}_b):=\frac{1}{2g+1}(g\mathcal{H}(\mathcal{F}_b)
-(g+1)\varepsilon(\mathcal{F}_b)).
\end{equation}
Here $\varepsilon(\mathcal{F}_b)$ is the topological Euler contribution as above.
That $\sigma_g^{{\rm alg}}$ is a local signature follows from
(\ref{H-ind-hyp}). More generally, if we find a Horikawa index in a class of fibrations
(say non-hyperelliptic fibrations of genus 3), then a formula of type (\ref{eq:alg-loc})
gives a local signature for such fibrations.

For more detail about local signatures, we refer to the survey articles Ashikaga-Endo \cite{AE} and
Ashikaga-Konno \cite{AK}. We also refer to recent works by Ashikaga-Yoshikawa \cite{AY}
and Sato \cite{Sato}.

\subsection{Matsumoto's formula}
\label{sec:Mat}

For a while we assume $g$ is $1$ or $2$. Let $(E,f,B)$ be a fibered 4-manifold of genus $g$.
For each $b\in B$, take a small closed disk neighborhood $\Delta \subset B$ of $b$ and consider the restriction of $f$ to
$\Delta\setminus \{b \}$. Let $x_b\in \mathcal{M}_g$ be the monodromy of this $\Sigma_g$-bundle
along the boundary $\partial \Delta$ with the counter-clockwise orientation, and set
\begin{equation}
\label{eq:mat}
\sigma_g(\mathcal{F}_b):=\phi_g(x_b)+{\rm Sign}(f^{-1}(\Delta))\in \mathbb{Q}.
\end{equation}
Here $\phi_g$ is the Meyer function of genus $g$.
Note that although $x_b$ is only defined up to conjugacy, $\phi_g(x_b)$ is well defined by Lemma \ref{lem:phi} (3).

\begin{proposition}[Y. Matusmoto \cite{Mat1} \cite{Mat2}]
\label{prop:Mat-loc}
Let $g=1$ or $2$. The assignment
$\sigma_g(\mathcal{F}_b)$ is a local signature for fibered 4-manifolds of genus $g$.
\end{proposition}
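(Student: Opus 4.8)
The plan is to verify directly that $\sigma_g$ satisfies the two defining properties of a local signature: it vanishes on general fibers, and for closed $E$ (hence closed $B$) one has ${\rm Sign}(E)=\sum_{b\in B}\sigma_g(\mathcal{F}_b)$. Before carrying this out I would first check that the right-hand side of (\ref{eq:mat}) is independent of the chosen disk $\Delta$: two choices $\Delta'\subset\Delta$ differ by $f^{-1}(\Delta\setminus{\rm Int}\,\Delta')$, a $\Sigma_g$-bundle over an annulus, hence a product $N^3\times[0,1]$ with $N^3$ a mapping torus; such a $4$-manifold has vanishing intersection form and therefore zero signature, so ${\rm Sign}(f^{-1}(\Delta))$ is well defined. (That $\phi_g(x_b)$ is well defined despite $x_b$ being defined only up to conjugacy is Lemma \ref{lem:phi}(3).)

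For the vanishing on general fibers: if $b$ is a regular value, then $f^{-1}(\Delta)$ is a trivial $\Sigma_g$-bundle $\Sigma_g\times\Delta$, so the monodromy $x_b$ is trivial and $\phi_g(x_b)=\phi_g(1)=0$ by Lemma \ref{lem:phi}(1); moreover ${\rm Sign}(\Sigma_g\times\Delta)=0$, since $H_2(\Sigma_g\times\Delta;\mathbb{R})$ is spanned by $[\Sigma_g\times{\rm pt}]$, which can be pushed into the boundary and hence has trivial intersection pairing. Thus $\sigma_g(\mathcal{F}_b)=0$.

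For the summation formula: let $b_1,\ldots,b_n$ be the images of the singular fibers, and choose disjoint closed disk neighbourhoods $\Delta_i\ni b_i$ small enough that each $\partial\Delta_i$ consists of regular values, so that (using properness of $f$) $E_i:=f^{-1}(\Delta_i)$ is a compact oriented $4$-manifold whose boundary $f^{-1}(\partial\Delta_i)$ is the closed $3$-manifold given by a mapping torus of $x_{b_i}$. Put $B_0:=B\setminus\bigcup_i{\rm Int}\,\Delta_i$ and $E_0:=f^{-1}(B_0)$; then $E_0\to B_0$ is an honest $\Sigma_g$-bundle over the compact oriented surface $B_0$ whose boundary components are the circles $\partial\Delta_i$. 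Novikov additivity of the signature (applicable since the pieces carry the orientations induced from $E$ and are glued along their common boundaries with opposite induced orientations) gives
$${\rm Sign}(E)={\rm Sign}(E_0)+\sum_{i=1}^n{\rm Sign}(E_i),$$
while Proposition \ref{prop:s(E)} applied to $E_0\to B_0$ gives ${\rm Sign}(E_0)=\sum_{i=1}^n\phi_g(x_{b_i})$, the monodromies being taken counter-clockwise around the $\partial\Delta_i$, i.e.\ exactly the classes occurring in (\ref{eq:mat}). Adding ${\rm Sign}(E_i)$ to each term, and using the previous paragraph to absorb the (vanishing) contributions of the general fibers, yields ${\rm Sign}(E)=\sum_{i=1}^n(\phi_g(x_{b_i})+{\rm Sign}(E_i))=\sum_{b\in B}\sigma_g(\mathcal{F}_b)$, as required.

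The step I would flag as the main (if modest) obstacle is the bookkeeping of orientations: one must make sure that the ``counter-clockwise'' boundary orientation of $\partial\Delta_i$ as a component of $\partial B_0$ entering Proposition \ref{prop:s(E)} is the same circle orientation as the ``counter-clockwise'' one entering the definition (\ref{eq:mat}) of $x_b$, since the opposite choice would replace $\phi_g(x_{b_i})$ by $\phi_g(x_{b_i}^{-1})=-\phi_g(x_{b_i})$ (Lemma \ref{lem:phi}(2)) and break the formula. Fixing the convention that ``counter-clockwise'' always refers to the boundary orientation of the disk being removed makes the two coincide; as a consistency check one can specialise Proposition \ref{prop:s(E)} to a pair of pants, where it reduces to the identity $\tau_g=\delta\phi_g$ together with Lemma \ref{lem:phi}(2).
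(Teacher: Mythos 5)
Your proof is correct and follows essentially the same route as the paper: decompose $E$ into $f^{-1}(B_0)$ and the pieces $f^{-1}(\Delta_i)$, apply Novikov additivity, and then apply Proposition \ref{prop:s(E)} to the genuine $\Sigma_g$-bundle $f^{-1}(B_0)\to B_0$. The extra checks you include (independence of the choice of $\Delta$, vanishing of ${\rm Sign}(\Sigma_g\times\Delta)$, and the orientation convention for $\partial\Delta_i$) are consistent with the paper's conventions and merely make explicit what the paper leaves tacit.
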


\begin{proof}
The property (1) is clear since $x_b$ is trivial if $\mathcal{F}_b$ is non-singular.
To prove (2), for each $i$ let $\Delta_i$ be a small closed disk neighborhood of $b_i$.
By Proposition \ref{prop:s(E)}, we have
\begin{align*}
{\rm Sign}(E) &={\rm Sign}(f^{-1}(B_0))+\sum_{i\in I} {\rm Sign}(f^{-1}(\Delta_i)) \\
&=\sum_{i\in I}\phi_g(x_{b_i})+\sum_{i\in I}{\rm Sign}(f^{-1}(\Delta_i)=\sum_{i\in I}\sigma(\mathcal{F}_{b_i}).
\end{align*}
\end{proof}

Matsumoto \cite{Mat1} \cite{Mat2} also gave some computations of his local signatures.
Using the Meyer function on the hyperelliptic mapping class group and applying
the formula (\ref{eq:mat}), Endo \cite{End} introduced a local signature for
hyperelliptic fibrations (see \S \ref{HMP}). By Terasoma, it was shown that Endo's local signature and
Arakawa-Ashikaga's local signature (\ref{eq:alg-loc}) coincide.  See \cite{End} Appendix.

The formula (\ref{eq:mat}) implies that the local signature is only determined by
topological data. But as Konno \cite{Konno1} observed, there exists a topologically
non-singular fiber germ of non-hyperelliptic fibrations of genus 3 which has a
non-zero Horikawa index. In fact, in the central fiber $f^{-1}(b)$ of Konno's example
is a non-singular hyperelliptic curve of genus 3. From the view point of local signatures,
this fiber germ should be thought as a singular fiber. A modification of the formula
(\ref{eq:mat}) for such situations will be explained in \S \ref{MPV}.

\section{Variations}
\label{sec:Var}

In this section we review higher genera analogues
and higher dimensional analogues of Meyer's $\phi_1$ or $\phi_2$.
First note that by Theorem \ref{thm:Satz2},
Meyer functions does not exist on $\mathcal{M}_g$ for $g>2$.
But the signature cocycle happens to be a coboundary
when it is pulled back to some group, for example, a subgroup of $\mathcal{M}_g$.
The examples in \S \ref{HMP} and \S \ref{MPV} are those of this kind. The example
in \S \ref{sec:theta} is in a situation of Remark \ref{var-sig}, and can be regard
as a generalization of Theorem \ref{thm:phi=eta}.

\subsection{Hyperelliptic mapping class group}
\label{HMP}

Let $\iota \in \mathcal{M}_g$ be a {\it hyperelliptic involution}, i.e., (the class of)
an involution of $\Sigma_g$ acting on $H_1(\Sigma_g;\mathbb{Z})$ as $-{\rm id}$. The
{\it hyperelliptic mapping class group} $\mathcal{H}_g$ is the centralizer of $\iota$:
$$\mathcal{H}_g:=\{ f\in \mathcal{M}_g ; f\iota=\iota f \}.$$

Let $\tau_g^H \in Z^2(\mathcal{H}_g;\mathbb{Z})$ be the restriction of $\tau_g$
to the subgroup $\mathcal{H}_g \subset \mathcal{M}_g$. Using a finite presentation
of $\mathcal{H}_g$ by Birman-Hilden \cite{BH} and Proposition \ref{prop:comb.cri},
Endo \cite{End} proved the following theorem.

\begin{theorem}[Endo \cite{End}]
The order of $[\tau_g^H] \in H^2(\mathcal{H}_g;\mathbb{Z})$ is $2g+1$. Furthermore,
there uniquely exists a function $\phi_g^H\colon \mathcal{H}_g \to (1/2g+1)\mathbb{Z}$
such that $\delta \phi_g^H=\tau_g^H$.
\end{theorem}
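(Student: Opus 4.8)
The plan is to apply the machinery of Proposition \ref{prop:comb.cri} (or more conveniently Proposition \ref{prop:Art}) to the group $\mathcal{H}_g$ equipped with the restricted cocycle $z=\tau_g^H$. The first step is to fix the finite presentation of $\mathcal{H}_g$ due to Birman--Hilden \cite{BH}. This presentation is of the form needed: the generators are Dehn twists $\zeta_1,\dots,\zeta_{2g+1}$ about a chain of simple closed curves, satisfying braid relations $\zeta_i\zeta_{i+1}\zeta_i=\zeta_{i+1}\zeta_i\zeta_{i+1}$ and commutation $\zeta_i\zeta_j=\zeta_j\zeta_i$ for $|i-j|\ge 2$ (so the underlying Artin group is the braid group $B_{2g+2}$), together with a small number of extra relations $r_j=1$ encoding the hyperelliptic involution and the fact that we are on a closed surface. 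Since the Artin part is connected, Proposition \ref{prop:Art} applies verbatim: by the computation carried out in the paragraph preceding that proposition, any solution $\{m_i\}$ of the relevant linear system is forced to be constant, $m_i=m$ for all $i$, so the only datum to determine is the single integer $m$ together with the minimal $n\in\mathbb{N}$ for which $n\cdot c(r_j)=m\cdot\alpha(r_j)$ holds for every extra relator $r_j$.

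Second, I would compute the 1-cochain $c\colon F\to\mathbb{Z}$ on the extra relators $r_j$. By definition $c(x)=\sum_{k}\tau_g^H(\varpi(x_1\cdots x_{k-1}),\varpi(x_k))$, and since $\tau_g^H$ is just the restriction of $\tau_g$, each term is the signature of an explicit total space $E(f_1,f_2)$, computable either by Meyer's algebraic formula via $V_{A,B}$ or by direct geometric arguments about Lefschetz-type fibrations. Likewise $\alpha(r_j)$ is just the exponent sum of $r_j$ in the chain generators, which is read off from the presentation. These are finite, bounded computations — the kind Endo actually performs in \cite{End} — and they produce the numbers $c(r_j)$ and $\alpha(r_j)$ explicitly. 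Feeding them into Proposition \ref{prop:Art}(1) then pins down that $n[\tau_g^H]=0$ precisely when $n$ is a multiple of $2g+1$; in particular the order of $[\tau_g^H]$ is exactly $2g+1$.

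Third, for the existence of $\phi_g^H$: this is Proposition \ref{prop:Art}(2) with $n=2g+1$ and the corresponding $m$. It gives $\phi_g^H(\varpi(x))=-c(x)+(m/n)\alpha(x)$, a well-defined function $\mathcal{H}_g\to(1/(2g+1))\mathbb{Z}$ with $\delta\phi_g^H=\tau_g^H$. For uniqueness, I would invoke $H^1(\mathcal{H}_g;\mathbb{Q})=0$: two such functions differ by a $\mathbb{Q}$-valued homomorphism $\mathcal{H}_g\to\mathbb{Q}$, and since $\mathcal{H}_g$ is perfect after tensoring with $\mathbb{Q}$ (the abelianization of $\mathcal{H}_g$ is finite, again visible from the Birman--Hilden presentation, all generators being conjugate Dehn twists), any such homomorphism is zero.

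The main obstacle is the second step: one must actually evaluate $c$ on the Birman--Hilden relators, which requires knowing the values of $\tau_g$ on the relevant pairs of mapping classes, and then verifying that the resulting linear condition on $(n,m)$ has $\gcd$-type answer $n=2g+1$. This is where the specific number $2g+1$ (rather than some other divisor) emerges, and it is genuinely a computation rather than a formal argument; getting it right is the crux, and it is exactly the content that Endo supplies in \cite{End}. A secondary point requiring care is checking that the Birman--Hilden presentation is indeed in the Artin-quotient form demanded by Proposition \ref{prop:Art} — i.e. that the braid relations among the chain twists are present and the defining graph is connected — so that the reduction to a single pair $(n,m)$ is legitimate.
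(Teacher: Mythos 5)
Your proposal follows exactly the route the paper indicates: apply the combinatorial criterion of Proposition \ref{prop:comb.cri} (in its Artin-quotient form, Proposition \ref{prop:Art}) to the Birman--Hilden presentation of $\mathcal{H}_g$ by the $2g+1$ chain twists, reduce everything to evaluating $c$ and the exponent-sum $\alpha$ on the finitely many extra relators, and get uniqueness of $\phi_g^H$ from $H^1(\mathcal{H}_g;\mathbb{Q})=0$ (equivalently the finiteness of the abelianization, as in the paper's remark on $\mathbb{Q}$-acyclicity). The paper itself does not carry out the numerical evaluation of $c$ on the relators either---it defers entirely to Endo \cite{End}---so your identification of that computation as the one genuinely nontrivial step is precisely where the remaining content lies.
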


The 1-cochain $\phi_g^H$ is called the {\it Meyer function for the hyperelliptic
mapping class group of genus $g$}.

\begin{remark}
The existence and uniqueness of $\phi_g^H$ also follow from the
$\mathbb{Q}$-acyclicity of $\mathcal{H}_g$ which is independently proved by
Cohen \cite{Coh} and Kawazumi \cite{Kaw1}.
\end{remark}

Remark that $\mathcal{H}_g=\mathcal{M}_g$ if $g=1$ or $2$. Thus the series $\phi_g^H$, $g\ge 3$
could be a higher genus analogue of Meyer's $\phi_1$ and $\phi_2$.
The values of $\phi_g^H$ on Dehn twists are given as follows (\cite{End} \cite{Morifuji}).
Let $C$ be an $\iota$-invariant simple closed curve on $\Sigma_g$.
We denote by $t_C$ the right handed Dehn twist along $C$, which is an element of $\mathcal{H}_g$.
If $C$ is non-separating, then $\phi_g^H(t_C)=(g+1)/2g+1$; if $C$ is separating
and separates $\Sigma_g$ into surfaces of genus $h$ and $g-h$, then
$\phi_g^H(t_C)=-4h(g-h)/2g+1$.

A fibered 4-manifold $(E,f,B)$ is called {\it hyperelliptic} if the monodromy
of the $\Sigma_g$-bundle over $B\setminus \{ b_i \}_i$ can take value in $\mathcal{H}_g$
by a suitable identification of a reference fiber with $\Sigma_g$.
Replacing $\phi_g$ with $\phi_g^H$ in (\ref{eq:mat}), Endo \cite{End}
introduced a local signature for hyperelliptic fibered 4-manifold.

Morifuji \cite{Morifuji} studied geometrical aspects of $\phi_g^H$.
He showed if $f\in \mathcal{H}_g$ is of finite order, then $\phi_g^H(f)$ equals
$\eta(f)$, the $\eta$-invariant (see \S \ref{sec:Atiyah}) of the mapping torus
$\Sigma_g \times [0,1]/(x,0)\sim (f(x),1)$.
Further, he showed that $\phi_g^H(f)=d_0(f)$ if
$f$ belongs to the hyperelliptic Torelli group, where $d_0$ is the so-called core
of the Casson invariant introduced by Morita \cite{Morita2} \cite{Morita3}.

\subsection{Family of smooth theta divisors}
\label{sec:theta}

Iida \cite{Iida} gave a higher dimensional analogue of Meyer's $\phi_2$,
which he called the {\it Meyer function for smooth theta divisors}.

Let $\mathfrak{S}_g:=\{ \tau \in M(g;\mathbb{C}); \ ^t \tau=\tau, {\rm Im}(\tau)>0 \}$
be {\it the Siegel upper half space} of degree $g$ and $f\colon \mathbb{A}_g\to \mathfrak{S}_g$
the universal family of principally polarized Abelian varieties. The fiber of $f$ at
$\tau\in \mathfrak{S}_g$ is the complex torus $A_{\tau}=\mathbb{C}^g/\Lambda_g$,
where $\Lambda_g$ is the lattice spanned by the column vectors of
the $g\times 2g$ matrix $(I_g \ \tau )$. We denote ${\bf e}(t)=\exp(2\pi \sqrt{-1}t)$.
The Riemann theta function
$$\theta(z,\tau):=\sum_{n\in \mathbb{Z}^g}
{\bf e}\left( \frac{1}{2}n\tau \ ^tn+n \ ^tz \right), \quad z\in \mathbb{C}^g,$$
defines a holomorphic section of a certain holomorphic vector bundle on $A_{\tau}$
and its zero locus is called the {\it theta divisor}. Set
$$\Theta:=\{ (z,\tau); \tau\in \mathfrak{S}_g, z\in A_{\tau}, \theta(z,\tau)=0 \}$$
and let $p\colon \Theta \to \mathfrak{S}_g$ be the natural projection. This is
the universal family of theta divisors. We denote by $\Theta_{\tau}$ the fiber
of $p$ at $\tau$. The group $Sp(2g;\mathbb{Z})$,
which for simplicity we denote here by $\Gamma_g$, naturally acts on $\mathfrak{S}_g$.
Iida introduced a $\Gamma_g$-action on $\Theta$ so that $p$ is $\Gamma_g$-equivariant.

The Zariski closed set
$\mathcal{N}_g:=\{ \tau\in \mathfrak{S}_g; {\rm Sing}(\Theta_{\tau}) \neq \emptyset \}$
is called the {\it Andreotti-Mayer} locus. The group $\Gamma_g$ acts on
the complement $\mathfrak{S}_g^{\circ}:=\mathfrak{S}_g \setminus \mathcal{N}_g$
properly discontinuously. Let $\mathcal{S}_g$ be the orbifold fundamental group
of the quotient orbifold $\Gamma_g \backslash \mathfrak{S}_g^{\circ}$.
In other words, $\mathcal{S}_g$ is the fundamental group of
the Borel construction
$(\mathfrak{S}_g^{\circ})_{\Gamma_g}:=E\Gamma_g \times_{\Gamma_g} \mathfrak{S}_g^{\circ}$,
where $E\Gamma_g$ is the total space of the classifying space of $\Gamma_g$.
The group $\mathcal{S}_g$ fits into an exact sequence
\begin{equation}
\label{S_g}
1\to \pi_1(\mathfrak{S}_g^{\circ})\to \mathcal{S}_g \to \Gamma_g \to 1.
\end{equation}
If $g=1$, $\mathcal{N}_g =\emptyset$ and $\Gamma_1 \backslash \mathfrak{S}_1^{\circ}$
is the moduli space of curves of genus 1, hence $\mathcal{S}_1=\mathcal{M}_1$.
By the Torelli theorem, $\Gamma_2 \backslash \mathfrak{S}_2^{\circ}$
is the moduli space of curves of genus 2 and $\mathcal{S}_2=\mathcal{M}_g$.

The projection $p$ induces a fiber bundle over $(\mathfrak{S}_g^{\circ})_{\Gamma_g}$.
The fiber is diffeomorphic to a smooth theta divisor.
By the construction given in Remark \ref{var-sig}, we get the signature cocycle
$c_g\colon \mathcal{S}_g \times \mathcal{S}_g \to \mathbb{Z}$.
If $g$ is odd, $c_g \equiv 0$ since the real dimension of a smooth theta divisor is $2g-2$.
When $g=2$, $c_2$ is the pull back of $\tau_2^{\rm sp}$ by (\ref{S_g}).
But if $g\ge 3$, this is not the case.

Using adiabatic limits of $\eta$-invariants and a certain automorphic form,
Iida constructed a 1-cochain of $\mathcal{S}_g$ which cobounds $c_g$.
Suppose $g$ is even.
An element $\sigma\in \mathcal{S}_g$ can be written as $\sigma=(\alpha,\gamma)$,
where $\alpha \colon [0,1]\to \mathfrak{S}_g^{\circ}$ is a continuous map
with $\alpha(0)$ a specified basepoint of $\mathfrak{S}_g^{\circ}$ and
$\gamma \in \Gamma_g$ such that $\alpha(1)=\gamma\cdot \alpha(0)$.
Consider the mapping torus $M_{\sigma}:=[0,1]\times_{\alpha}\Theta/(0,x)\sim (1,\gamma x)$
and the projection $\pi\colon M_{\sigma}\to S^1=[0,1]/0\sim 1$.
He introduced a metric of the relative tangent bundle $T(M_{\sigma}/S^1)$
and a connection on $M_{\sigma}$. Then the adiabatic limit $\eta^0(M_{\sigma})$ is defined
(see \S \ref{sec:Atiyah}). Set
$$\Phi_g(\sigma):=\eta^0(M_{\sigma})+
\frac{(-1)^{\frac{g}{2}}2^{g+3}(2^{g+2}-1)}{(g+3)!}B_{\frac{g}{2}+1}
\int_{S^1}\alpha^* d^c \log ||\Delta_g(\tau)||.$$
Here $\Delta_g(\tau)$ is a Siegel cusp form of weight $(g+3)g!/2$ with zero divisor $\mathcal{N}_g$
and $B_k$ is the $k$-th Bernoulli number.

\begin{theorem}[Iida \cite{Iida}]
The 1-cochain $\Phi_g$ cobounds $c_g$, i.e.,
$$c_g(\sigma_1,\sigma_2)=\Phi_g(\sigma_1)-\Phi_g(\sigma_1\sigma_2)+\Phi_g(\sigma_2), \quad
\sigma_1,\sigma_2\in \mathcal{S}_g.$$
\end{theorem}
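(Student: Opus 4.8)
Here $g$ is even, as required for $\Phi_g$ to be defined (for $g$ odd, $c_g\equiv0$ and there is nothing to prove). The plan is to imitate Atiyah's proof of Theorem~\ref{thm:phi=eta}: realize $c_g(\sigma_1,\sigma_2)$ as a signature of a bounding manifold, apply the Atiyah--Patodi--Singer signature theorem \cite{APS}, and pass to the adiabatic limit so that the boundary term becomes a combination of the $\eta^0(M_{\sigma_i})$ while the interior integral reproduces the $\Delta_g$-term. As in Remark~\ref{var-sig}, $c_g(\sigma_1,\sigma_2)={\rm Sign}(W)$, where $W$ is the total space of the $\Theta$-bundle over a pair of pants $P=\Sigma_{0,3}$ whose monodromy sends the three boundary loops $\ell_1,\ell_2,\ell_3$ (with $\ell_1\ell_2\ell_3=1$) to $\sigma_1,\sigma_2,(\sigma_1\sigma_2)^{-1}$. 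With the boundary orientation, $\partial W$ is the disjoint union $M_{\sigma_1}\sqcup M_{\sigma_2}\sqcup M_{(\sigma_1\sigma_2)^{-1}}$ over the three boundary circles, and $M_{(\sigma_1\sigma_2)^{-1}}\cong -M_{\sigma_1\sigma_2}$. Give $W$ a submersion metric built from a vertical metric, a metric on $P$ that is a product near $\partial P$, and a horizontal connection, all chosen to restrict over each boundary circle to the geometric data Iida uses to define $\eta^0(M_{\sigma_i})$; scaling the base directions by $\varepsilon^{-1}$ produces a family $g^W_\varepsilon$.

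Next, apply APS and let $\varepsilon\to0$. For every $\varepsilon$ we have ${\rm Sign}(W)=\int_W L(W,g^W_\varepsilon)-\eta(\partial W,g^W_\varepsilon)$, the left side being independent of $\varepsilon$. By the definition of the adiabatic $\eta$-invariant, $\eta(\partial W,g^W_\varepsilon)\to\eta^0(M_{\sigma_1})+\eta^0(M_{\sigma_2})-\eta^0(M_{\sigma_1\sigma_2})$. For the interior integral, since each fiber $\Theta_\tau$ over $\mathfrak{S}_g^{\circ}$ is smooth, the bundle of fiberwise harmonic forms over $P$ is flat, so there is no kernel jump and the Bismut--Cheeger \cite{BC} adiabatic-limit computation applies with no extra $\widetilde\eta$-correction, giving $\int_W L(W,g^W_\varepsilon)\to\int_P\beta$, where $\beta$ is the characteristic $2$-form on $P$ obtained by fiberwise integration in the local index formula for the family $p\colon\Theta\to\mathfrak{S}_g^{\circ}$ (a smooth closed form on $\mathfrak{S}_g^{\circ}$, pulled back to $P$).

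The crucial step is to identify $\beta$ with the curvature of the cusp form. By Grothendieck--Riemann--Roch for the family of theta divisors, together with the Bismut--Gillet--Soul\'e theory of Quillen metrics on determinant line bundles, $\beta$ is proportional to the first Chern form of the Hodge-type line bundle on $\mathfrak{S}_g$ that carries $\Delta_g$; explicitly $\beta=c\,dd^c\log\|\Delta_g(\tau)\|+d\gamma$ with $c=\frac{(-1)^{g/2}\,2^{g+3}(2^{g+2}-1)}{(g+3)!}\,B_{g/2+1}$ and $\gamma$ a globally defined $1$-form that can be normalized to integrate to zero over the circle direction of each mapping torus (equivalently, absorbed into $\eta^0$). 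By Stokes on $P$, $\int_P\beta=c\int_{\partial P}d^c\log\|\Delta_g\|$, which splits over the three boundary circles into the integrals $\int_{S^1}\alpha^*d^c\log\|\Delta_g\|$ attached to $\sigma_1$, $\sigma_2$ and (with a sign) $\sigma_1\sigma_2$. Substituting this and the boundary limit into the APS formula and matching orientations and sign conventions, we obtain ${\rm Sign}(W)=\Phi_g(\sigma_1)-\Phi_g(\sigma_1\sigma_2)+\Phi_g(\sigma_2)$, that is $\delta\Phi_g=c_g$.

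The hard part is precisely this identification of $\beta$ and of the constant $c$: it requires the full Bismut--Cheeger local index and adiabatic-limit machinery, the Bismut--Gillet--Soul\'e computation of the Quillen metric on the determinant line of the family of smooth theta divisors, recognising the relevant automorphic form as the Siegel cusp form with zero divisor $\mathcal{N}_g$, and careful bookkeeping of the $L$-genus coefficients --- which is the source of the Bernoulli number, the factorial and the power of $2$. A secondary technical point, needed for the boundary term to converge to $\sum\pm\eta^0(M_{\sigma_i})$, is to arrange the metric and connection on $W$ so as to restrict compatibly to Iida's data on all three boundary mapping tori and along the paths $\alpha$ representing the $\sigma_i$.
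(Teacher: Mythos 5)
The chapter states this theorem without proof, simply citing Iida \cite{Iida}, so there is no in-text argument to compare against; your proposal has to be measured against Iida's own proof. Your skeleton --- realize $c_g(\sigma_1,\sigma_2)$ as ${\rm Sign}(W)$ for the $\Theta$-bundle $W$ over a pair of pants, apply the Atiyah--Patodi--Singer theorem \cite{APS} to a family of submersion metrics $g^W_\varepsilon$, let the boundary term converge in the adiabatic limit to $\eta^0(M_{\sigma_1})+\eta^0(M_{\sigma_2})-\eta^0(M_{\sigma_1\sigma_2})$ while the interior $L$-integral converges to the fiber-integrated local index form, and then convert that interior term into boundary integrals of $d^c\log\|\Delta_g\|$ --- is the correct overall strategy and is exactly the generalization of Atiyah's proof of Theorem~\ref{thm:phi=eta} that Iida carries out, using \cite{BC} for the existence of the limits.

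That said, the proposal leaves the entire substance of the theorem as a black box, and one step as written would not close up. First, the identification of the limiting interior density with $c\,dd^c\log\|\Delta_g(\tau)\|$, including the constant $c=\frac{(-1)^{g/2}2^{g+3}(2^{g+2}-1)}{(g+3)!}B_{g/2+1}$, is not a routine consequence of the machinery you invoke: it requires computing the degree-two part of the fiber integral of $L(T(\Theta/\mathfrak{S}_g^{\circ}))$ as a form on $\mathfrak{S}_g^{\circ}$ in terms of the Hodge-type line bundle, and the genuinely nontrivial input that the Siegel cusp form $\Delta_g$ with zero divisor $\mathcal{N}_g$ exists and that $-dd^c\log\|\Delta_g\|$ equals that Chern form away from $\mathcal{N}_g$; this is where the Bernoulli number comes from, and none of it is sketched. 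Second, the Stokes step is not sound as stated: if $\beta=c\,dd^c\log\|\Delta_g\|+d\gamma$ only up to an exact form, then $\int_P\beta$ acquires the extra term $\int_{\partial P}\gamma$, and one cannot ``normalize $\gamma$ to integrate to zero'' or ``absorb it into $\eta^0$'' --- $\eta^0(M_\sigma)$ is already pinned down by the chosen geometric data. What is needed is either an equality of forms on $\mathfrak{S}_g^{\circ}$ (mere cohomological equality is vacuous, since $\mathfrak{S}_g$ is contractible and the statement must be $\Gamma_g$-equivariant), or a separate argument that $\sigma\mapsto\int_{S^1}\alpha^*\gamma$ contributes a coboundary that cancels. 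Finally, the sign bookkeeping is glossed: since ${\rm Sign}(W)=\int_WL-\eta(\partial W)$ while $\Phi_g=\eta^0+c\int_{S^1}\alpha^*d^c\log\|\Delta_g\|$ carries a \emph{plus} sign on $\eta^0$, a naive substitution yields $\sum\pm\bigl(c\int-\eta^0\bigr)$ rather than $\sum\pm\Phi_g$; the orientation conventions on the three boundary mapping tori must be checked explicitly for the identity to come out with the claimed signs.
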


It should be remarked that the uniqueness of $\Phi_g$ does not hold.
In fact, Iida proved that $H^1(\mathcal{S}_g;\mathbb{Z})=\mathbb{Z}$ for $g\ge 4$
(\cite{Iida} Theorem 13). The 1-cochain $c_g$ actually takes values in $\mathbb{Q}$ (\cite{Iida} Theorem 15 ).
As a special case, Iida obtained an analytic expression of the Meyer function of genus 2.

\begin{corollary}[Iida \cite{Iida}] For $\sigma=(\alpha,\gamma)\in \mathcal{S}_2=\mathcal{M}_2$,
we have
$$\phi_2(\sigma)=\eta^0(M_{\sigma})-\frac{2}{15}\int_{S^1}
\alpha^* d^c \log || \chi_2(\tau)||^2.$$
Here $\chi_2(\tau)$ is a Siegel modular form of weight $5$ called the {\it Igusa modular form}.
\end{corollary}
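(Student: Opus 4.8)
The plan is to deduce the corollary by specializing the preceding theorem of Iida to $g=2$ and then invoking the uniqueness of the Meyer function $\phi_2$. First recall the facts already recorded above: $\mathcal{S}_2=\mathcal{M}_2$ (via the Torelli theorem, so that the homomorphism $\mathcal{S}_2\to\Gamma_2$ in (\ref{S_g}) is the symplectic representation $\rho\colon\mathcal{M}_2\to Sp(4;\mathbb{Z})$), and, since for $g=2$ the fiber $\Theta_{\tau}$ of $p\colon\Theta\to\mathfrak{S}_2^{\circ}$ is a smooth complex curve of genus $2$, the signature cocycle $c_2$ of the associated $\Sigma_2$-bundle over $(\mathfrak{S}_2^{\circ})_{\Gamma_2}$ is the pullback $\rho^{*}\tau_2^{\rm sp}=\tau_2$ by Meyer's theorem. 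In particular the mapping torus $M_{\sigma}$ appearing in Iida's formula is the mapping torus of this genus $2$ curve, so the $\eta^0$-term in that formula is precisely the one occurring in the corollary.

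Since $g=2$ is even, Iida's theorem applies and produces a $1$-cochain $\Phi_2$ on $\mathcal{M}_2$ with $\delta\Phi_2=c_2=\tau_2$. A priori $\Phi_2$ is only $\mathbb{R}$-valued, but this suffices: because $\phi_2$ also satisfies $\delta\phi_2=\tau_2$, the difference $\Phi_2-\phi_2$ is a $1$-cocycle, i.e.\ a homomorphism $\mathcal{M}_2\to\mathbb{R}$, and this vanishes since $H_1(\mathcal{M}_2;\mathbb{Z})$ is finite. Hence $\Phi_2=\phi_2$ (in particular $\Phi_2$ is $(1/5)\mathbb{Z}$-valued, recovering Iida's rationality statement in this case). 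This is exactly the uniqueness argument by which $\phi_2$ was characterized through (\ref{eq:dphi}).

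It then remains to write $\Phi_2$ out explicitly. Substituting $g=2$ into the constant $\tfrac{(-1)^{g/2}2^{g+3}(2^{g+2}-1)}{(g+3)!}B_{g/2+1}$ and using $B_2=1/6$ gives an explicit rational number. The weight $5$ Siegel cusp form $\Delta_2$, characterized by having zero divisor the Andreotti--Mayer locus $\mathcal{N}_2$ — which for $g=2$ is the locus of decomposable principally polarized abelian varieties — is a nonzero constant multiple of the Igusa modular form $\chi_2$; since $d^{c}\log$ of a nonzero constant vanishes, the automorphic term of $\Phi_2$ becomes a multiple of $\int_{S^1}\alpha^{*}d^{c}\log\|\chi_2(\tau)\|^{2}$. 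Collecting the numerical factors — the power of $2$, the factorial, the Bernoulli number, and the normalization of the Petersson norm relating $\|\Delta_2\|$ to $\|\chi_2\|$ — yields the coefficient $-2/15$ and hence the displayed identity.

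The conceptual content is entirely contained in Iida's theorem and in the already recorded facts $\mathcal{S}_2=\mathcal{M}_2$, $c_2=\tau_2$, and $H^1(\mathcal{M}_2;\mathbb{Q})=0$; what is left is identification and bookkeeping. The step that deserves genuine care is the passage from the general automorphic form $\Delta_g$ to the Igusa form $\chi_2$: one must verify that a weight $5$ Siegel cusp form with zero divisor $\mathcal{N}_2$ is unique up to scalar, so that it is a constant multiple of $\chi_2$, and then keep exact track of the Petersson-norm normalization so that the general constant collapses precisely to $-2/15$.
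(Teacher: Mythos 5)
The paper states this corollary without proof (it is simply quoted from Iida), so there is no in-paper argument to measure you against; your overall route --- specialize Iida's theorem to $g=2$, observe that $\mathcal{S}_2=\mathcal{M}_2$ and $c_2=\tau_2$, and conclude $\Phi_2=\phi_2$ because the difference of two cochains cobounding $\tau_2$ is a homomorphism $\mathcal{M}_2\to\mathbb{R}$, hence zero since $H_1(\mathcal{M}_2;\mathbb{Z})$ is finite --- is exactly the intended derivation, and that part is correct.

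The gap is precisely at the step you yourself flag as needing care and then do not carry out. Substituting $g=2$ into Iida's constant gives $(-1)\cdot 2^{5}\cdot(2^{4}-1)/5!\cdot B_{2}=-\tfrac{32\cdot 15}{120}\cdot\tfrac{1}{6}=-\tfrac{2}{3}$. If, as you assert, $\Delta_2$ were just a nonzero scalar multiple of $\chi_2$ (both being weight-$5$ forms vanishing on $\mathcal{N}_2$), then $d^{c}\log\|\Delta_2\|=d^{c}\log\|\chi_2\|=\tfrac12\,d^{c}\log\|\chi_2\|^{2}$, and the coefficient in front of $\int_{S^1}\alpha^{*}d^{c}\log\|\chi_2\|^{2}$ would be $-\tfrac13$, not $-\tfrac{2}{15}$. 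So the ``bookkeeping'' does not collapse to $-2/15$ under your hypotheses: either the relation between $\Delta_2$ and $\chi_2$ is a power relation rather than a proportionality (which would happen if they vanish to different orders along $\mathcal{N}_2$, or if the two norms carry different $\det(\mathrm{Im}\,\tau)$-normalizations), or one of the constants as transcribed must be re-derived from Iida's paper. In addition, your claim that a weight-$5$ cusp form with zero divisor $\mathcal{N}_2$ is unique up to scalar needs at least a one-line justification (the quotient of two such is a nowhere-vanishing $\Gamma_2$-invariant holomorphic function, hence constant by the Koecher principle and the structure of the ring of genus-$2$ modular forms). Until the precise definition of $\Delta_g$, its vanishing order along $\mathcal{N}_g$, and the norm conventions are pinned down, the coefficient $-2/15$ remains asserted rather than proved.
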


\subsection{The Meyer functions for projective varieties}
\label{MPV}

We mention an approach by Kuno \cite{Kuno1} \cite{Kuno2} to
extend Matsumoto's formula (\ref{eq:mat}) for generic non-hyperelliptic fibrations
of small genera.

Let $X\subsetneq \mathbb{P}_N$ be a smooth projective variety of dimension $n\ge 2$,
embedded in a complex projective space of dimension $N$. The intersection of $X$
and a generic plane in $\mathbb{P}_N$ of codimension $n-1$ is non-singular of dimension 1.
Set $k:=N-n+1$ and let
$G_k(\mathbb{P}_N)$ be the Grassmann manifold of $k$-planes of $\mathbb{P}_N$.
The set
$$D_X:=\{ W\in G_k(\mathbb{P}_N); W {\rm \ meets\ } X {\rm \ not\ transversally\ } \}$$
is called the $k$-th {\it associated subvariety} of $X$ \cite{GKZ}. Over the complement
$U^X:=G_k(\mathbb{P}_N) \setminus D_X$, there is a family of compact Riemann surfaces
$p_X\colon \mathcal{C}^X\to U^X$ whose fiber at $W\in U^X$ is $X\cap W$.
Let $g$ be the genus of the fibers and let
$\rho_X\colon \pi_1(U^X)\to \mathcal{M}_g$ be the monodromy of this family.

\begin{theorem}[Kuno \cite{Kuno2}]
There exists a unique $\mathbb{Q}$-valued 1-cochain
$\phi_X\colon \pi_1(U^X)\to \mathbb{Q}$ whose coboundary equals
the pull-back $\rho_X^*\tau_g$.
\end{theorem}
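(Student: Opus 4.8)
The plan is to reduce the statement to the vanishing of $\rho_X^*[\tau_g]$ in $H^2(\pi_1(U^X);\mathbb{Q})$ together with the vanishing of $H^1(\pi_1(U^X);\mathbb{Q})$; the first gives existence of a rational $1$-cochain cobounding $\rho_X^*\tau_g$, and the second gives uniqueness. For existence, I would use Proposition \ref{prop:comb.cri}: it suffices to produce a rational solution of the linear system $c(r_j)=\sum_i m_i e_i^*(r_j)$ attached to a presentation of $\pi_1(U^X)$, where $c$ is the canonical $1$-cochain on the free group built from the cocycle $z=\rho_X^*\tau_g$. Equivalently, one shows $\rho_X^*[\tau_g]=0$ rationally by exhibiting a convenient geometric $2$-cycle argument, but the cleanest route is the Lefschetz-pencil structure of the family $p_X\colon\mathcal{C}^X\to U^X$.

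The key geometric input is that a generic pencil inside $G_k(\mathbb{P}_N)$ meeting $X$ defines a Lefschetz fibration whose monodromy consists of Dehn twists along vanishing cycles, and more importantly that $\pi_1(U^X)$ is generated by meridian loops around the discriminant $D_X$ whose images under $\rho_X$ are (conjugates of) such Dehn twists $t_{C}$. Since $\tau_g(t_C, t_C^{\pm 1})$ and $\tau_g$ evaluated on pairs of such elements can be computed from the symplectic description $\tau_g^{\mathrm{sp}}$, and since the relevant relations among these meridians (the braid-type relations and the global relation coming from $H_1$ of a line in $G_k(\mathbb{P}_N)$) are exactly of the Artin type considered in Proposition \ref{prop:Art}, the linear system has a rational solution. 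Concretely, I would show the monodromy factors through (a quotient of) an Artin group on the vanishing cycles, apply Proposition \ref{prop:Art}(1) to get $n\cdot c(r_j)=m\cdot\alpha(r_j)$ for suitable $n,m$, and then Proposition \ref{prop:Art}(2) produces the $1$-cochain $\phi_X$ with $\delta\phi_X=\rho_X^*\tau_g$.

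For uniqueness I would invoke $H^1(U^X;\mathbb{Q})=0$: since $U^X=G_k(\mathbb{P}_N)\setminus D_X$ is the complement of an irreducible hypersurface (the associated subvariety $D_X$ is irreducible for $X$ smooth) in a smooth rational projective variety with $H^1=0$, the abelianization of $\pi_1(U^X)$ is finite (generated by the single meridian class modulo a degree relation), so $H^1(\pi_1(U^X);\mathbb{Q})=0$ and any two $\mathbb{Q}$-valued primitives of $\rho_X^*\tau_g$ differ by a rational homomorphism, hence coincide.

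The main obstacle I anticipate is verifying that $\rho_X^*[\tau_g]$ is rationally trivial, i.e. that the monodromy image really is controlled by Artin-type relations on Dehn twists so that Proposition \ref{prop:Art} applies: one must identify a presentation of $\pi_1(U^X)$ (via a Lefschetz pencil / Zariski–van Kampen argument) in which every relator is either a braid/commutation relation between vanishing-cycle Dehn twists or the global product relation, and then check that $c$ evaluates compatibly on these. The braid relations are handled exactly as in the Artin-group computation preceding Proposition \ref{prop:Art} (using $z(x,x^{-1})=0$ and that $c$ is a class function, which hold for $\tau_g$ by Lemma \ref{lem:tau}), so the real work is the global relation, where one must compute $c$ of the product of all meridians and match it against $\alpha$; this is where the genus enters and where an explicit constant $n$ (the denominator of $\phi_X$) gets pinned down.
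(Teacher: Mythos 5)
The paper itself gives no proof of this theorem --- it is quoted from \cite{Kuno2}, and the surrounding text only records the two facts your argument also turns on: that $\pi_1(U^X)$ is normally generated by lassos mapping to Dehn twists, and that the relevant input comes from ``a certain extension of the theory of Lefschetz pencils.'' Your overall skeleton is the right one: uniqueness from $H^1(\pi_1(U^X);\mathbb{Q})=0$, which does follow as you say because $G_k(\mathbb{P}_N)$ is simply connected with $H^2\cong\mathbb{Z}$ and $D_X$ is a nonempty irreducible hypersurface, so $H_1(U^X;\mathbb{Z})$ is finite cyclic; and existence from the rational vanishing of $\rho_X^*[\tau_g]$, attacked via Proposition~\ref{prop:comb.cri}. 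The uniqueness half is essentially complete (modulo checking that $D_X$ really is an irreducible hypersurface, which needs a word).

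The gap is in the existence half, and it is exactly where you locate it, but your proposed fix does not work as stated: Proposition~\ref{prop:Art} is not applicable, because $\pi_1(U^X)$ is not presented as a quotient of an Artin group on the meridians. The relations produced by a Zariski--van Kampen argument on a generic plane section are braid-monodromy relations of the form $a_i=w\,a_{\sigma(i)}\,w^{-1}$ for words $w$ that are not controlled, together with the global product relation $a_1\cdots a_\mu=1$; the meridians themselves need not satisfy $a_ia_ja_i=a_ja_ia_j$ or $a_ia_j=a_ja_i$ (those hold among their Dehn-twist images in special cases, not in $\pi_1(U^X)$). The repair is to apply Proposition~\ref{prop:comb.cri} directly: for any relator of conjugation type $r=a_i(wa_jw^{-1})^{-1}$ one still gets $c(r)=c(a_i)+c(a_j^{-1})=0$ and $\sum_k m_ke_k^*(r)=m_i-m_j$, using only $z(x,x^{-1})=0$ and the class-function property of $c$ (which do hold for $z=\rho_X^*\tau_g$ by Lemma~\ref{lem:tau}); so the linear system forces all $m_i$ equal and reduces to the single solvable equation $c(a_1\cdots a_\mu)=m\mu$. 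But this repaired argument still rests on the assertion that a generic plane section of $G_k(\mathbb{P}_N)$ yields a \emph{complete} presentation of $\pi_1(U^X)$ of exactly this form (meridian generators, conjugation relators, one product relator). That is a genuine theorem about Lefschetz-type sections of Grassmannian complements, not a formality, and it is the main content you would have to supply; as written, your proposal assumes it.
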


The 1-cochain $\phi_X$ is called the {\it Meyer function associated to $X\subset \mathbb{P}_N$}.
The fundamental group $\pi_1(U^X)$ is {\it normally} generated by a single
element called a {\it lasso}, which is represented by a loop ``going once around $D_X$".
By $\rho_X$, a lasso is mapped to a Dehn twist.
By a certain extension of theory of Lefschetz pencils,
the value of $\phi_X$ on a lasso is given in terms of invariants of $X$.
Under a mild condition on $X$,
it follows that $\phi_X$ is an unbounded function. As a consequence, we can show that
the group $\pi_1(U^X)$ is non-amenable for such $X$.

As an application, we can define a local signature for generic non-hyperelliptic
fibrations of small genera. Let us illustrate this by an example. Let $(E,f,B)$ be a fibered
4-manifold of genus 3, such that the restriction of $f$ to $B\setminus \{ b_i \}_{i\in I}$
is a continuous family of Riemann surfaces with fiber non-hyperelliptic. We call such
$(E,f,B)$ a {\it non-hyperelliptic fibration of genus 3}. Note that we assume a fiberwise
complex structure on the general fibers, but do not assume a global complex structure.
The idea is to construct a certain universal family and
to lift the monodromy to the fundamental group of the base space of it.

Hereafter let $X$ be the image of the Veronese embedding $v_4\colon \mathbb{P}_2\to \mathbb{P}_{14}$
of degree 4. A generic hyperplane section of $\mathbb{P}_{14}$ corresponds to a smooth plane curve of
degree 4 in $\mathbb{P}_2$, which is non-hyperelliptic of genus 3. The group $\mathcal{G}=PGL(3)$
naturally acts on $\mathbb{P}_{14}$ preserving $D_X$. This induces $\mathcal{G}$-actions on
$\mathcal{C}_X$ and $U^X$, making $p_X\colon \mathcal{C}^X\to U^X$ a $\mathcal{G}$-equivariant map.
Therefore we have a continuous family of non-hyperelliptic Riemann surfaces of genus 3
over the Borel construction $U^X_{\mathcal{G}}:=E\mathcal{G} \times_{\mathcal{G}} U^X$,
which we denote by $p_u\colon \mathcal{C}^X_{\mathcal{G}}\to U^X_{\mathcal{G}}$.
This family has a certain universal property: if $p\colon E\to B$ is a continuous family of
non-hyperelliptic Riemann surfaces of genus 3, then there exist a continuous map
$g\colon B\to U^X_{\mathcal{G}}$ such that the fiber product $\mathcal{C}^X \times_g B$ and
the original family are {\it isotopic}. Moreover, such $g$ is unique up to homotopy.
The fundamental group $\pi_1(U^X_{\mathcal{G}})$ fits into an exact sequence
$$\pi_1(PGL(3))\cong \mathbb{Z}/3\mathbb{Z} \to \pi_1(U^X) \to \pi_1(U^X_{\mathcal{G}}) \to 1.$$
From this and the existence of $\phi_X$ on $\pi_1(U^X)$, we can deduce that there exists
a unique $\mathbb{Q}$-valued 1-cochain $\phi_3^{NH}\colon \pi_1(U^X_{\mathcal{G}})\to \mathcal{Q}$
which cobounds the pull-back of $\tau_3$ by the monodromy $\rho_u\colon \pi_1(U^X_{\mathcal{G}}) \to \mathcal{M}_3$.

Now, let $\mathcal{F}_b$ be a fiber germ of non-hyperelliptic fibration of genus 3.
Take a small closed disk $\Delta$ with center $b$,
so that there is no singular fiber on $\Delta \setminus \{ b\}$.
By the universality of $p_u$, there is a continuous map
$g_{\mathcal{F}_b}\colon \Delta \setminus \{ b\} \to U^X_{\mathcal{G}}$.
Set $x_{\mathcal{F}_b}:=(g_{\mathcal{F}_b})_*(\partial \Delta)\in \pi_1(U^X_{\mathcal{G}})$,
where we give $\partial \Delta$ the counterclockwise orientation.
Note that $x_{\mathcal{F}_b}$ is uniquely determined up to conjugacy.
Set
$$\sigma_3^{NH}(\mathcal{F}_b):=\phi_3^{NH}(x_{\mathcal{F}_b})+{\rm Sign}(f^{-1}(\Delta)).$$
By applying the proof of Proposition \ref{prop:Mat-loc}, we have the following.

\begin{theorem}[\cite{Kuno1}]
The assignment $\sigma_3^{NH}$ is a local signature for
non-hyperelliptic fibrations of genus 3.
\end{theorem}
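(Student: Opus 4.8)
The plan is to run the proof of Proposition~\ref{prop:Mat-loc} essentially verbatim; the only genuinely new ingredient is that the Meyer function $\phi_3^{NH}$ lives on $\pi_1(U^X_{\mathcal{G}})$ rather than on $\mathcal{M}_3$, on which no Meyer function exists, so I must first lift the monodromy of a non-hyperelliptic fibration of genus $3$ through the universal family $p_u\colon \mathcal{C}^X_{\mathcal{G}}\to U^X_{\mathcal{G}}$ before $\phi_3^{NH}$ can be applied.

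First I would check property (1). If $\mathcal{F}_b$ is a general fiber, then over a small closed disk $\Delta\ni b$ the map $f$ restricts to a continuous family of non-hyperelliptic Riemann surfaces of genus $3$, which by the universal property of $p_u$ is classified --- up to isotopy, hence up to homotopy of the classifying map --- by a map $\Delta\to U^X_{\mathcal{G}}$ defined on all of $\Delta$. Since $\partial\Delta$ bounds in $\Delta$, it follows that $x_{\mathcal{F}_b}=1\in\pi_1(U^X_{\mathcal{G}})$, and $\phi_3^{NH}(1)=0$ follows directly from $\delta\phi_3^{NH}=\rho_u^*\tau_3$ and $\tau_3(1,1)=0$, as in Lemma~\ref{lem:phi}~(1). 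Moreover $f^{-1}(\Delta)$ is a trivial $\Sigma_3$-bundle over a disk, so ${\rm Sign}(f^{-1}(\Delta))=0$; hence $\sigma_3^{NH}(\mathcal{F}_b)=0$.

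For property (2), assume $E$ and $B$ closed, let $\{b_i\}_{i\in I}$ be the singular points, choose pairwise disjoint small closed disks $\Delta_i\ni b_i$, and set $B_0=B\setminus\bigcup_{i}{\rm Int}(\Delta_i)$. Novikov additivity gives ${\rm Sign}(E)={\rm Sign}(f^{-1}(B_0))+\sum_{i\in I}{\rm Sign}(f^{-1}(\Delta_i))$, so it remains to prove ${\rm Sign}(f^{-1}(B_0))=\sum_{i\in I}\phi_3^{NH}(x_{\mathcal{F}_{b_i}})$. The restriction of $f$ to $B_0$ is a continuous family of non-hyperelliptic Riemann surfaces of genus $3$, hence classified by some $g_{B_0}\colon B_0\to U^X_{\mathcal{G}}$, and the $1$-cochain $\psi:=\phi_3^{NH}\circ(g_{B_0})_*$ on $\pi_1(B_0)$ satisfies $\delta\psi=$ the pull-back of $\tau_3$ along the monodromy of the smooth $\Sigma_3$-bundle $f^{-1}(B_0)\to B_0$ (using that the pulled-back family and the original one are isomorphic as smooth oriented $\Sigma_3$-bundles, so their monodromies and the signatures of their total spaces coincide). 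Then the argument sketched for Proposition~\ref{prop:s(E)} applies with $\phi_g$ replaced by $\psi$: choosing a pants decomposition of $B_0$ and combining Novikov additivity with Meyer's identification of the signature of a $\Sigma_g$-bundle over a pair of pants with a value of $\tau_g$, the contributions telescope via $\delta\psi=\cdots$ to ${\rm Sign}(f^{-1}(B_0))=\sum_{i\in I}\psi(\partial\Delta_i)$, each $\partial\Delta_i$ oriented counter-clockwise.

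It then remains to identify $\psi(\partial\Delta_i)$ with $\phi_3^{NH}(x_{\mathcal{F}_{b_i}})$. The restriction of $g_{B_0}$ to $\Delta_i\setminus\{b_i\}$ and the germ classifying map $g_{\mathcal{F}_{b_i}}$ both classify the same family over $\Delta_i\setminus\{b_i\}$, so by the homotopy uniqueness in the universal property they are homotopic; hence $(g_{B_0})_*(\partial\Delta_i)$ is conjugate to $x_{\mathcal{F}_{b_i}}$ in $\pi_1(U^X_{\mathcal{G}})$. Since $\phi_3^{NH}$ is a class function --- the map $x\mapsto\phi_3^{NH}(yxy^{-1})-\phi_3^{NH}(x)$ is a homomorphism $\pi_1(U^X_{\mathcal{G}})\to\mathbb{Q}$ by Lemma~\ref{lem:tau}~(5), and $H^1(\pi_1(U^X_{\mathcal{G}});\mathbb{Q})=0$ because $\phi_3^{NH}$ is unique --- I get $\psi(\partial\Delta_i)=\phi_3^{NH}(x_{\mathcal{F}_{b_i}})$, and therefore ${\rm Sign}(E)=\sum_{i\in I}\bigl(\phi_3^{NH}(x_{\mathcal{F}_{b_i}})+{\rm Sign}(f^{-1}(\Delta_i))\bigr)=\sum_{i\in I}\sigma_3^{NH}(\mathcal{F}_{b_i})=\sum_{b\in B}\sigma_3^{NH}(\mathcal{F}_b)$, using property (1) at the last step. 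The main obstacle I expect is the bookkeeping in the middle step: one must verify that the classifying maps are well defined enough --- only up to homotopy, and only after the choices of isotopies supplied by the universal property --- that the pants-decomposition computation is independent of all choices, and that the smooth $\Sigma_3$-bundles underlying the various pulled-back families genuinely agree with the relevant restrictions of $f$; this is exactly where the precise form of the universality statement for $p_u$ must be invoked.
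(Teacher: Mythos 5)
Your proposal is correct and follows essentially the same route as the paper, which proves this theorem simply by invoking the proof of Proposition~\ref{prop:Mat-loc} after lifting the monodromy to $\pi_1(U^X_{\mathcal{G}})$ via the universal family $p_u$. Your additional care about the homotopy uniqueness of classifying maps and the conjugacy invariance of $\phi_3^{NH}$ (deduced from its uniqueness) fills in exactly the details the paper leaves implicit.
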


The formulation of $\sigma_3^{NH}$ gives a topological interpretation of Konno's
example in \S \ref{sec:Mat}. While the monodromy around $b$ is trivial, its
lift $x_{\mathcal{F}_b}\in \pi_1(U^X_{\mathcal{G}})$ is non-trivial and contributes
to $\sigma_3^{NH}$. Similar constructions are possible for generic non-hyperelliptic
fibrations of genus 4 and 5. For details, see \cite{Kuno2}.

\vspace{0.5cm}
\noindent \textbf{Acknowledgments.} The author would like
to thank Tadashi Ashikaga for helpful comments on an earlier
draft of this paper.

\frenchspacing

\end{document}